\newtheorem{theo}{Theorem}[section]
\newtheorem{lemma}[theo]{Lemma}
\newtheorem{coro}[theo]{Corollary}
\newtheorem{conj}[theo]{Conjecture}
\newcommand{\GG}{{\cal G}}
\newcommand{\eps}{{\varepsilon}}
\def \RR {R}
\def \PP {\mathbb P}
\begin{document}
\date{}

\title{
Optimal compression of approximate inner products and
dimension reduction
}

\author{Noga Alon\textsuperscript{1}
\and
Bo'az Klartag\textsuperscript{2}
}

\footnotetext[1]{Sackler School of Mathematics
and Blavatnik School of
Computer Science, Tel Aviv University, Tel Aviv 69978, Israel.
Email: {\tt nogaa@tau.ac.il}.  Research supported in part by a
USA-Israeli
BSF grant 2012/107, by an ISF grant 620/13 and
by the Israeli I-Core program.}
\footnotetext[2]{
Sackler School of
Mathematics, Tel Aviv University, Tel Aviv 69978, Israel
and Department of Mathematics,
Weizmann Institute of Science, Rehovot 7610001, Israel.
Email: {\tt klartagb@tau.ac.il}.  Research supported in part by an
ERC  grant.
}

\maketitle

\begin{abstract}

Let $X$ be a set of $n$ points of norm at most $1$ in the Euclidean
space $R^k$, and suppose $\eps>0$. An $\eps$-distance sketch for $X$ is a
data structure that, given  any two points of $X$ enables one to recover
the square of the
(Euclidean) distance between them up to an {\em additive} error of
$\eps$. Let $f(n,k,\eps)$ denote the minimum possible number of bits of
such a sketch.  Here we determine $f(n,k,\eps)$ up to a constant factor
for all $n \geq k \geq 1$ and all $\eps \geq \frac{1}{n^{0.49}}$. Our
proof is algorithmic, and provides an efficient algorithm for computing
a sketch of size $O(f(n,k,\eps)/n)$ for each point, so that the
square of the distance
between any two points can be computed from their sketches up to an
additive error of $\eps$ in time linear in the length of the sketches.
We also discuss the case of smaller  $\eps>2/\sqrt n$ and
obtain some new results about dimension reduction in this
range. In particular, we show that for any such $\eps$ and any
$k \leq t=\frac{\log (2+\eps^2 n)}{\eps^2}$ there are
configurations of $n$ points in $R^k$ that cannot be embedded in
$R^{\ell}$ for $\ell < ck$ with $c$ a small absolute positive
constant, without distorting some inner products (and distances)
by more than
$\eps$. On the positive side, we provide a randomized
polynomial time algorithm for a bipartite variant
of the Johnson-Lindenstrauss lemma in which scalar products
are approximated up to an additive error of at most $\eps$.
This variant allows a reduction of the dimension down to
 $O(\frac{\log (2+\eps^2 n)}{\eps^2})$, where $n$ is the number of
points.

\end{abstract}

\newpage
\setcounter{page}{1}

\section{Introduction}
A crucial tool in several important algorithms is the ability to
generate a compact representation (often called a sketch)
of high dimensional data.
Examples include streaming  algorithms \cite{AMS}, \cite{Mu},
compressed sensing \cite{CRT} and  data structures supporting
nearest neighbors search \cite{AC}, \cite{HIM}.
A natural problem in this area
is that of representing a collection of $n$ points in the $k$-dimensional Euclidean ball in a way that enables one to
recover approximately the distances or the inner products between
the points. The most basic question about it is the minimum
possible number of bits required in such a representation as a
function of $n,k$ and the approximation required. Another challenge
is to design economic sketches that can be generated efficiently
and support efficient  procedures for recovering the approximate
inner product (or distance) between any two given points.

Consider a sketch that enables one to
recover each inner product (or square distance) between any pair of
the $n$ points up to an additive error of $\eps$.
The Johnson-Lindenstrauss Lemma \cite{JL} provides an elegant
way to generate such a sketch.
The assertion of the lemma is that any set of $n$ points
in an Euclidean space can be projected onto a $t$-dimensional
Euclidean space, where $t=\Theta(\frac{\log n}{\eps^2})$, so that all
distances and inner products between pairs of points
are preserved up to a factor of
$1+\eps$. This supplies a sketch obtained by storing the
(approximate) coordinates of the projected points. Although the
above  estimate for $t$ has been recently shown by
Larsen and Nelson \cite{LN} to be tight up to a constant factor
for all $\eps \geq \frac{1}{n^{0.49}}$, improving by a logarithmic
factor the estimate in \cite{Al}, this does not provide a
tight estimate for the minimum possible number of bits required for
the sketch.
The results in
Kushilevitz, Ostrovsky and Rabani \cite{KOR} together with the lower
bound in \cite{LN}, however,
determine the minimum possible number of bits
required for such a sketch up to a
constant factor for all $k \geq \frac{\log n}{\eps^2}$ where
$\eps \geq \frac{1}{n^{0.49}}$, leaving a gap in the bounds for
smaller dimension $k$. Our first result here closes this gap.

\subsection{Our contribution}
Let $X$ be a set of $n$ points of norm at most $1$ in the Euclidean space
$R^k$, and suppose $\eps>0$. An $\eps$-distance sketch for $X$ is a data
structure that, given  any two points of $X$ enables one to recover the
square of the Euclidean distance between them, and their inner product,
up to an {\em additive} error of $\eps$. Let $f(n,k,\eps)$ denote the
minimum possible number of bits of such a sketch.  Our first main result
is a determination of $f(n,k,\eps)$ up to a constant factor for all $n
\geq k \geq 1$ and all $\eps \geq \frac{1}{n^{0.49}}$.
\begin{theo}
\label{t11}
For all $n$ and $\frac{1}{n^{0.49}} \leq \eps \leq 0.1$ the
function $f(n,k,\eps)$ satisfies the following
\begin{itemize}
\item
For
$ \frac{\log n}{\eps^2}  \leq k \leq n$,
$$
f(n,k,\eps)
=\Theta(\frac{n \log n}{\eps^2}).
$$
\item
For $\log n \leq k \leq \frac{\log n}{\eps^2}$,
$$
f(n,k,\eps)=\Theta (nk \log (2+\frac{\log n}{\eps^2 k})).
$$
\item
For  $1 \leq k \leq \log n$,
$$
f(n,k,\eps)=\Theta(nk \log (1/\eps)).
$$
\end{itemize}
\end{theo}

The proof is
algorithmic, and provides an efficient algorithm for computing a sketch
of size $O(f(n,k,\eps)/n)$ for each point, so that the square of the
distance between any two points can be computed from their sketches up to
an additive error of $\eps$ in time linear in the length of the sketches.
The tight bounds show that if $\eps\geq \frac{1}{n^{0.49}}$ and $\ell
\leq c \frac{\log n}{\eps^2}$ for some (small) absolute positive constant
$c$, then
$f(n,k,\eps)$ for $k=\frac{\log n}{\eps^2}$ is significantly larger than
$f(n,\ell,2\eps)$, supplying an alternative proof of the main result of
\cite{LN} which shows that the $\frac{\log n}{\eps^2}$ estimate in the
Johnson-Lindenstrauss dimension reduction lemma is tight.

An advantage of this alternative
proof is that an appropriate adaptation of it
works for smaller values of $\eps$, covering all the relevant
range. For any $\eps \geq \frac{2}{\sqrt n}$, define $t=\frac{\log
(2+\eps^2 n)}{\eps^2}$. We show that for every $k \leq t$ there is
a collection of $n$ points of norm at most $1$ in $R^k$, so that
in any embedding of them in dimension $\ell$ such that no
inner product (or distance) between a pair of points
is distorted by more than $\eps$, the dimension $\ell$ must be at
least $\Omega(k)$. This extends the main result of \cite{LN}, where
the above is proved only for
$\eps \geq \frac{\log^{0.5001} n}{\sqrt k}$.

The above result for small values of $\eps$ suggests that it may be
possible to improve the Johnson-Lindenstrauss Lemma in this range.
Indeed, our
second main result addresses dimension reduction in this range.
Larsen and Nelson \cite{LN1} asked if for any
$\eps$ the assertion of the
Johnson-Lindenstrauss Lemma can be improved, replacing
$\frac{\log n}{\eps^2}$ by $t=\frac{\log(2+\eps^2 n)}{\eps^2}$.
(Note that this is trivial for $\eps<\frac{1}{\sqrt n}$ as
in this range $t>n$ and it is true for $\eps>\frac{1}{n^{0.49}}$
as in this case $\log(2+\eps^2 n)=\Theta(\log n)$.)
Motivated by this we prove the following bipartite version of this
statement.
\begin{theo}
\label{t13}
There exists an absolute positive constant $C$ such that for every
vectors $a_1,a_2, \ldots ,a_n,b_1, b_2, \ldots ,b_n \in R^n$, each
of Euclidean norm at most $1$, and for every $0<\eps<1$ and
$t=\lfloor C \frac{\log (2+\eps^2 n)}{\eps^2} \rfloor$ there are vectors
$x_1,x_2, \ldots ,x_n, y_1, y_2, \ldots, y_n \in R^t$ so that
for all $i,j$
$$
|\langle x_i, y_j \rangle-\langle a_i, b_j \rangle | \leq \eps
$$
\end{theo}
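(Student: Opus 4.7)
The plan is to show that a single random linear map $G : R^n \to R^t$ suffices, where $G$ is a $t \times n$ Gaussian matrix with i.i.d.\ entries of variance $1/t$ and we set $x_i = G a_i$ and $y_j = G b_j$. Then $\langle x_i, y_j \rangle = a_i^T G^T G b_j$ is unbiased for $\langle a_i, b_j \rangle$ because $\EE[G^T G] = I_n$, and standard Hanson--Wright-type concentration gives the per-pair tail bound
\[
\PP\bigl(|\langle x_i, y_j \rangle - \langle a_i, b_j \rangle| > \eta\bigr) \leq 2\exp(-c t \eta^2)
\]
valid for $0 < \eta < 1$ with an absolute constant $c>0$. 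The trivial regime $t \geq n$ (which covers all $\eps = O(1/\sqrt{n})$) is handled by taking $x_i = a_i$ and $y_j = b_j$ padded with zeros, so we may henceforth assume $t < n$ and $\eps^2 n \geq 2$.

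The main difficulty is that a naive union bound over the $n^2$ pairs would force $t = O(\log n/\eps^2)$, the standard Johnson--Lindenstrauss value, which is a factor of $\log n / \log(2+\eps^2 n)$ larger than what we want. The gain has to come from the special bipartite structure, and the natural way to exploit it is to rewrite the error as $X_{ij} = \langle u_i, b_j \rangle$ with $u_i = (G^T G - I) a_i$, and to analyse $\max_{i,j} |X_{ij}|$ as a supremum of a single quadratic process in $G$ indexed by the $n^2$ pairs $(a_i, b_j)$ rather than as a maximum of $n^2$ independent events. A generic-chaining-style bound on this supremum, using the intrinsic pseudometric $d((i,j),(i',j')) = (\mathrm{Var}(X_{ij} - X_{i'j'}))^{1/2}$, has a chance of producing the improved dependence, because nearby pairs contribute redundantly rather than linearly to the union bound.

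A parallel route I would pursue is to reduce the problem to finding a rank-$t$ approximation $N$ of the matrix $M = (\langle a_i, b_j \rangle)_{i,j}$ in entrywise $\ell_\infty$-norm, noting that $M$ has $\gamma_2$-factorisation norm at most $1$. The target bound $t = O(\log(2+\eps^2 n)/\eps^2)$ suggests a Maurey-type empirical averaging (average $t$ i.i.d.\ copies of the rank-one sketch $\langle g, a_i\rangle\langle g, b_j\rangle$ for standard Gaussian $g$), with the concentration analysis applied at \emph{two scales}: for the many pairs with $|\langle a_i, b_j\rangle|$ near zero only a weak bound is needed, while for the few pairs with $|\langle a_i, b_j\rangle|$ bounded away from zero one uses a sharper estimate. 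The hard part will be to make the combined bound clean enough to give exactly $\log(2+\eps^2 n)$, and in particular to verify that the dependence degenerates to the trivial $t = n$ in the regime $\eps^2 n = O(1)$---a feature that the bipartite problem must possess but that the symmetric Johnson--Lindenstrauss problem, by the Larsen--Nelson lower bound of \cite{LN}, provably does not.
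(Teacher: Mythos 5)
Your proposal is built around the random linear map $G$ with $x_i = Ga_i$ and $y_j = Gb_j$, and your ``route 2'' (Maurey averaging of $t$ rank-one sketches $\langle g,a_i\rangle\langle g,b_j\rangle$) is literally the same construction written differently: averaging $t$ i.i.d.\ Gaussians $g_1,\dots,g_t$ is the same as taking $G$ to have rows $g_k/\sqrt t$. Unfortunately this shared construction cannot prove Theorem~\ref{t13}, and the obstruction is elementary. Take $a_1=\cdots=a_n=e_1$ and $b_j=e_j$ for $j=1,\dots,n$, so $\langle a_i,b_j\rangle=\delta_{j1}$. The error at $(1,j)$ is $\langle Ge_1,Ge_j\rangle$; conditioning on $g_1:=Ge_1$, the variables $\langle g_1,Ge_j\rangle$ for $j\ge 2$ are i.i.d.\ $N(0,|g_1|^2/t)$ with $|g_1|\approx 1$, so their maximum is $\Theta(\sqrt{\log n /t})$, forcing $t\gtrsim \log n/\eps^2$. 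For $\eps$ slightly above $n^{-1/2}$ (say $\eps=n^{-1/2+1/\sqrt{\log n}}$) this is asymptotically larger than $C\log(2+\eps^2 n)/\eps^2$ for every fixed $C$, while still being $o(n)$ so the trivial regime does not apply. The generic-chaining and two-scale ideas cannot rescue this: chaining is a two-sided bound for Gaussian suprema and confirms the $\sqrt{\log n/t}$ rate here, and the ``easy'' pairs with $\langle a_i,b_j\rangle\approx 0$ are precisely the ones forcing the union bound in this example, so there is no scale separation to exploit. A further tell: your construction produces $|x_i|,|y_j|=O(1)$, so it would implicitly prove Conjecture~\ref{c14}, which the paper states is open.

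The paper's proof exploits the bipartite structure in a way your plan does not: it treats the two sides asymmetrically. It fixes $K=\{x\in\RR^n: |\langle x/\sqrt t,b_j\rangle|\le c\eps\ \forall j\}$, uses the Khatri--Sidak lemma to show $\gamma_n(K)\ge e^{-c_0 t}$, and then invokes a low-$M^*$/finite-volume-ratio argument (Theorems~\ref{thm_1405} and~\ref{thm_1059} and Corollary~\ref{cor_1104}) to produce a $t$-dimensional subspace $E$ such that every ball $v+CK$ with $|v|\le\sqrt t$ meets $E$. The vectors $y_j$ are then set \emph{deterministically} to $\sqrt t\, P(U^{-1}b_j)$ where $U$ maps $\RR^t$ onto $E$, and each $x_i$ is found individually as a point with $Ux_i\in E\cap(\sqrt t\, a_i+CK)$; with this pairing one summand in the error decomposition~(\ref{eq_9361}) vanishes identically, and the other is controlled by $x_i$ lying in the slab intersection. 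None of this is a linear map applied to both families, the $x_i$ are allowed to have large norm, and the randomness (when the subspace is chosen randomly for the algorithmic version) lives only in $U$. Your intuition that the $\gamma_2$-norm/rank-$t$ approximation viewpoint is relevant is on the right track as a \emph{reformulation} (the paper notes this in its concluding remarks), but the mechanism that actually delivers the bound is convex geometry of the slab body $K$, not concentration of a random bilinear form.
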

The proof of the theorem is algorithmic, providing a randomized
polynomial time algorithm for computing the vectors $x_i,y_j$ given
the vectors $a_i,b_j$.

\subsection{Related work}

As mentioned above, one way to obtain a sketch for the above
problem when $k \geq \frac{\log n}{\eps^2}$ and $\eps \geq
\frac{1}{n^{0.49}}$ is to apply the
Johnson-Lindenstrauss Lemma \cite{JL} (see \cite{AC} for an
efficient implementation) projecting the points into
a $t$-dimensional space, where $t=\Theta(\frac{\log n}{\eps^2})$, and then
rounding each point to its closest neighbor in an appropriate
$\eps$-net. This provides a sketch of size $O(t \log (1/\eps))$ bits
per point, which by the results in \cite{LN} is optimal up to a
$\log(1/\eps)$ factor for these values of $n$ and $k$.

A tight upper bound of $O(t)$ bits per point for these values of
the parameters, with an efficient recovery procedure,
follows from the work of \cite{KOR}. Their work does not seem to
provide tight bounds for smaller values of $k$.

A very recent paper of Indyk and Wagner \cite{IW} addresses the
harder problem of approximating the inner products between pairs of
points up to a {\em relative} error of $\eps$, for the special case
$k=n$, and determines the minimum number of bits required here
up to a factor of $\log(1/\eps)$.

There have been  several papers dealing with the tightness of the
dimension $t$ in the Johnson-Lindenstrauss lemma, culminating with
the recent work of Larsen and Nelson that determines it
up to a constant factor for $\eps \geq \frac{1}{n^{0.49}}$
(see \cite{LN} and the references therein). For smaller values of
$\eps$ the situation is more complicated. Our results here,
extending the one of \cite{LN}, show
that no reduction to dimension smaller than
$t=\frac{\log (2+\eps^2 n)}{\eps^2}$ is possible, for
any $n \geq k \geq t$ and any $\eps>\frac{2}{\sqrt n}$. (For any
smaller value of $\eps$, or for any $k<t$
no reduction by more than a constant factor
is possible).
There is no known improvement in the statement of the
Johnson-Lindenstrauss Lemma for small values of $\eps$,
and our bipartite
version and some related results proved here are the first to
suggest that such an improvement may indeed hold.

\subsection{Techniques}

Our arguments combine probabilistic and geometric tools. The
lower bound for the function $f(n,k,\eps)$ is proved by a probabilistic
argument. We provide two proofs of the upper bound.  The first is based
on a short yet intriguing volume argument. Its main disadvantage
is that it
is not constructive, and its main advantage is that by combining
it with results about Gaussian correlation it can be extended to
deal with
smaller values of $\eps$ as well, for all the relevant range.
The second proof is algorithmic
and is based on randomized rounding.

The results about improved (bipartite) dimension reduction for small
$\eps$ are proven using several tools  from convex geometry including
the low-$M^*$ estimate and the finite volume-ratio theorem (see, e.g.,
\cite{AGM}), and basic results about the positive correlation between
symmetric convex events with the Gaussian measure. We believe that
these tools may be useful in the study of related algorithmic
questions in high dimensional geometry.

\section{Formal statement of the results}

Theorem \ref{t11} supplies an alternative proof of the main result of
\cite{LN} about dimension reduction. For $n \geq k \geq \ell$
and $\eps \geq \frac{1}{n^{0.49}}$ we
say that there is an $(n,k,\ell, \eps)$-Euclidean dimension reduction
if for any points $x_1,\ldots,x_n \in R^k$ of norm at most one,
there exist points $y_1,\ldots,y_n \in R^{\ell}$ satisfying
\begin{equation} |x_i - x_j|^2 - \eps \leq
 |y_i - y_j|^2 \leq |x_i - x_j|^2 + \eps \qquad \qquad (i,j=1,\ldots,n).
\label{eq_1005}
 \end{equation}
\begin{coro}
\label{c12}
There exists an absolute positive constant $c>0$ so that for
any $n \geq k>ck \geq \ell$ and for $1/n^{0.49} \leq \eps \leq 0.1$,
there is an $(n, k, \ell, \eps)$-Euclidean dimension reduction
if and only if $\ell = \Omega(\log n / \eps^2)$.

\medskip Moreover, the same holds if we replace additive distortion
by multiplicative distortion, i.e., if we
replace  condition (\ref{eq_1005}) by the following condition
\begin{equation} (1 - \eps) \cdot |x_i - x_j|^2 \leq
|y_i - y_j|^2 \leq (1 + \eps) \cdot |x_i - x_j|^2
\qquad \qquad (i,j=1,\ldots,n). \label{eq_1015}
 \end{equation}
\end{coro}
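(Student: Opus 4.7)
The plan is to derive the corollary from Theorem~\ref{t11} via a sketching reduction, combined with the classical Johnson--Lindenstrauss lemma for the easy direction. For the positive direction, if $\ell \geq C \log n / \eps^2$ for a suitable absolute constant $C$, the Johnson--Lindenstrauss lemma produces a random linear map $R^k \to R^\ell$ achieving the multiplicative condition (\ref{eq_1015}); since the points have norm at most one, their squared pairwise distances lie in $[0,4]$, so (\ref{eq_1015}) also implies (\ref{eq_1005}) after rescaling $\eps$ by an absolute constant.

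For the negative direction I would argue by contradiction. Suppose there is an $(n,k,\ell,\eps)$-Euclidean dimension reduction with $k = \Theta(\log n/\eps^2)$ but $\ell \leq c_0 \log n/\eps^2$ for a sufficiently small absolute constant $c_0$. Composing this reduction with an optimal $\eps$-distance sketch in dimension $\ell$, whose existence is guaranteed by the upper bound half of Theorem~\ref{t11} in $R^\ell$ (after a trivial rescaling to ensure the reduced points still lie in the unit ball), produces a valid $O(\eps)$-distance sketch for the original $n$ points in $R^k$ of total size $O(f(n,\ell,\eps))$. But the lower bound half of Theorem~\ref{t11} in $R^k$ forces any such sketch to use at least $\Omega(f(n,k,O(\eps))) = \Omega(n \log n / \eps^2)$ bits, hence
\[
 f(n,\ell,\eps) \;\geq\; c_1 \cdot \frac{n \log n}{\eps^2}
\]
for some absolute $c_1 > 0$. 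Substituting the second bullet of Theorem~\ref{t11} on the left (or the third bullet when $\ell < \log n$) and setting $u = \log n/(\eps^2 \ell)$, this inequality reduces to $\log(2+u) \geq c_2 u$ for some absolute $c_2 > 0$, which holds only when $u$ is bounded, i.e. $\ell = \Omega(\log n/\eps^2)$, contradicting the choice of $\ell$.

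Finally, an $\eps$-multiplicative dimension reduction automatically yields a $4\eps$-additive one, since squared pairwise distances are at most $4$, so the lower bound transfers to condition (\ref{eq_1015}) with only an absolute constant change in $\eps$; the matching upper bound in multiplicative form is the standard Johnson--Lindenstrauss statement.

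The main obstacle I anticipate is the careful bookkeeping in the composition step: one must verify that the additive errors from the dimension reduction and from the subsequent sketch combine to a total error of magnitude $O(\eps)$, and handle the minor technicality that the reduced points may need to be rescaled by a factor of $1 + O(\eps)$ to fit into the unit ball before Theorem~\ref{t11} is invoked. Beyond that, the elementary manipulation of the formulas of Theorem~\ref{t11} across the subregimes $\ell \geq \log n$ and $\ell < \log n$ is routine.
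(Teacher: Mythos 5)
Your proposal is correct and is the approach the paper itself outlines (tersely) in its concluding remarks: compare the bounds on $f(n,\cdot,\cdot)$ from Theorem~\ref{t11} across dimensions, using that an $(n,k,\ell,\eps)$-reduction composed with an $\eps$-distance sketch in $R^{\ell}$ produces an $O(\eps)$-distance sketch in $R^{k}$, then read off the forced size of $\ell$ from the explicit formulas. Two small wrinkles are worth noting. First, your negative direction is written for $k=\Theta(\log n/\eps^{2})$; this does cover all $k\ge \log n/\eps^{2}$ via the padding observation $R^{\lceil\log n/\eps^{2}\rceil}\subset R^{k}$ (so any $(n,k,\ell,\eps)$-reduction restricts to one from dimension $\lceil \log n/\eps^{2}\rceil$), but when $k<\log n/\eps^{2}$ the hypothesis $\ell\le ck$ forces $\ell < c\log n/\eps^{2}$ and the corollary is then asserting that no reduction exists at all. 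That case needs the same comparison but with the second or third bullet of Theorem~\ref{t11} applied to $f(n,k,\cdot)$ as well, using the fact that $x\mapsto x\log\bigl(2+\log n/(\eps^{2}x)\bigr)$ decreases by an arbitrarily large factor when $x$ is multiplied by a sufficiently small constant --- the same monotonicity you already invoke on the $\ell$-side. Second, an $\eps$-distance sketch in the paper's sense must also recover inner products, whereas a reduction satisfying (\ref{eq_1005}) only controls squared distances; the standard remedy is to adjoin the origin to the point set so that distances encode norms and hence inner products, changing the bounds only by absolute constants. Neither wrinkle alters the substance of your argument, and your treatment of the ``Moreover'' part (multiplicative implies additive on a bounded domain, JL gives the multiplicative upper bound) is exactly what the paper intends.
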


Corollary \ref{c12} means that if $k \geq c_1 \log n / \eps^2$, then
there is an $(n, k, \eps^{-2} \log n,\eps)$-Euclidean dimension reduction
(by the Johnson-Lindenstrauss Lemma),
and that if there is an $(n, k, \ell,\eps)$-Euclidean dimension reduction
with $\ell=o(k)$ then necessarily $k \geq \ell \geq c_2 \eps^{-2} \log n $,
for some absolute constants $c_1, c_2 > 0$.

In Theorem \ref{t11} and Corollary \ref{c12} it is assumed that  $\eps \geq \frac{1}{n^{0.49}}$.
For smaller $\eps$ we can combine some of our techniques with
Harg\'e's Inequality about Gaussian correlation and prove the
following extension of Theorem \ref{t11}.
\begin{theo}
\label{t23}
For all $n$ and $\eps  \geq \frac{2}{\sqrt{n}}$ the
function $f(n,k,\eps)$ satisfies the following,
where $t=\frac{\log (2+\eps^2 n)}{\eps^2}$.
\begin{itemize}
\item
For
$ t  \leq k \leq n$,
$$
\Omega(nt) \leq f(n,k,\eps) \leq O(n \frac{\log n}{\eps^2}).
$$
\item
For $\log (2+\eps^2 n) \leq k \leq t$,
$$
f(n,k,\eps)=\Theta (nk \log (2+\frac{t}{k})).
$$
\item
For  $1 \leq k \leq \log (2+\eps^2 n)$,
$$
f(n,k,\eps)=\Theta(nk \log (1/\eps)).
$$
\end{itemize}
\end{theo}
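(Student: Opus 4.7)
The plan is to mirror the three-regime structure of Theorem~\ref{t11}, with $\log(2+\eps^2 n)$ playing the role that $\log n$ plays there. When $\eps \ge n^{-0.49}$ the two quantities are comparable and Theorem~\ref{t23} coincides with Theorem~\ref{t11}; the substantive range is $2/\sqrt{n} \le \eps < n^{-0.49}$. In this range each individual inner-product approximation fails with only inverse-polynomial probability, so the usual union bound over the $\binom{n}{2}$ pairs is insufficient. The remedy, anticipated in the techniques subsection, is Harg\'e's Gaussian correlation inequality (essentially $\gamma_n(A\cap B)\ge \gamma_n(A)\gamma_n(B)$ for symmetric convex $A,B$), which lets one pass from per-pair success probabilities to joint success.

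For the upper bound in the top regime $k\ge t$, apply Johnson--Lindenstrauss to embed into $R^m$ with $m=O(\eps^{-2}\log n)$, then round coordinatewise in an $\eps$-net; this yields $f(n,k,\eps)\le O(n\eps^{-2}\log n)$. For $k\le t$ the upper bound follows from the volume/dictionary argument already used for Theorem~\ref{t11}. One samples a random Gaussian dictionary $\DD=\{v_1,\ldots,v_N\}\subset R^k$ with $N=2^{O(k\log(2+t/k))}$ in the middle regime (respectively $N=2^{O(k\log(1/\eps))}$ in the bottom regime), drawn as the columns of a single Gaussian matrix $G$. Given a configuration $x_1,\ldots,x_n$ in the unit ball, one first selects indices $i_1,\ldots,i_n$ so that each $v_{i_a}$ approximates $x_a$ at the appropriate scale (a covering-number calculation), and then checks that the events
\[
 E_{a,b}=\bigl\{\,|\langle v_{i_a},v_{i_b}\rangle-\langle x_a,x_b\rangle|\le \eps\,\bigr\}
\]
are, conditionally on those indices, symmetric convex events in the remaining Gaussian entries of $G$, so Harg\'e's inequality yields $\PP(\bigcap_{a,b}E_{a,b})\ge\prod_{a,b}\PP(E_{a,b})$, which a parameter balance makes positive. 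The lower bounds follow the probabilistic argument of Theorem~\ref{t11}: in each regime one exhibits a family of $2^{\Omega(f)}$ configurations of $n$ unit vectors in $R^k$ whose inner-product profiles are pairwise $2\eps$-separated, forcing any valid sketch to distinguish them. The middle-regime family uses random coordinates at scale $\eps\sqrt{k/t}$ with $O(\sqrt{t/k})$ levels; anti-concentration of sums of $k$ i.i.d.\ sub-Gaussian products provides the separation and produces the $\log(2+t/k)$ factor per coordinate. The bottom-regime lower bound uses a direct $\eps$-net argument, and the $\Omega(nt)$ lower bound in the top regime is inherited from the middle regime taken at $k=t$.

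The main obstacle is making Harg\'e's inequality interact correctly with the dictionary construction. The events $E_{a,b}$ are symmetric convex in $G$ only when the indices $i_a,i_b$ are fixed beforehand, yet the indices are chosen based on $G$. This is resolved by a two-stage argument: use part of the randomness to fix a deterministic covering of the unit ball of $R^k$ and a selection rule $x\mapsto i(x)$, then condition on the chosen indices and apply Harg\'e to the conditional law of the remaining entries of $G$, which by column independence are still Gaussian. A secondary delicate point is calibrating $N$: the bound $\log N = O(k\log(2+t/k))$ is strictly smaller than the $\eps$-net bound $O(k\log(1/\eps))$ precisely because only inner products need to be preserved, and exploiting this slack is exactly where a Gaussian dictionary (and Harg\'e's inequality) beats a deterministic net.
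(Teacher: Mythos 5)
Your high-level diagnosis is right: for $\eps$ close to $2/\sqrt{n}$ the union bound over $\binom{n}{2}$ pairs fails, and the paper's remedy is indeed Harg\'e's Gaussian correlation inequality. Your lower-bound sketch also matches the paper in spirit: a $\delta$-separated net $B$ in $B^k$ with $\delta \approx \sqrt{k/t}$ together with a fixed set of ``distinguishing'' random directions gives $n\log|B| = \Omega(nk\log(2+t/k))$, and the bottom regime is a direct $\eps$-net count. So the architecture of your argument is sound.

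The gap is in how you propose to invoke Harg\'e in the middle regime. You write that, after fixing a selection rule and conditioning on the chosen indices, the events $E_{a,b}=\{|\langle v_{i_a},v_{i_b}\rangle-\langle x_a,x_b\rangle|\le \eps\}$ become symmetric convex events in the remaining Gaussian entries of $G$, to which Harg\'e applies. They do not: $\langle v_{i_a},v_{i_b}\rangle$ is \emph{bilinear} in the entries of the two columns $v_{i_a}, v_{i_b}$, so $E_{a,b}$ is a region bounded by hyperboloids, not a symmetric slab, and the intersection $\bigcap_{a,b}E_{a,b}$ is in general not convex. Harg\'e's inequality (and Royen's theorem) require both sets to be centrally-symmetric and convex, so the claimed inequality $\PP(\bigcap E_{a,b})\ge\prod\PP(E_{a,b})$ does not follow. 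The two-stage ``fix a covering, then condition on indices'' device does not resolve this, because for most pairs $(a,b)$ both $v_{i_a}$ and $v_{i_b}$ remain random, and the bilinearity persists.

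The paper circumvents this by never considering the joint event over all pairs at once. Instead it works with $\eps$-separated Gram matrices $\GG$ directly (no dictionary), chooses $V_1,\ldots,V_n$ uniformly in a ball, conditions on $\|V_i-w_i\|\le\delta$ for all $i$, and then conditions \emph{sequentially} on $V_1, V_2,\ldots$. At step $i$, with $V_1,\ldots,V_{i-1}$ exposed, the relevant event for $V_i$ is that $V_i-w_i$ lies in the intersection of $n-1$ slabs (normals $w_j$ for $j>i$ and the now-deterministic $V_\ell$ for $\ell<i$) with a centered ball. This intersection \emph{is} centrally-symmetric and convex in the single variable $V_i-w_i$, so Harg\'e applies to bound the conditional probability against the ball measure, and Khatri--Sidak factors the Gaussian measure of the slab intersection as a product; this is precisely Lemma~\ref{l62}. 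Multiplying the conditional probabilities across $i$ gives the $e^{-O(nk\log(1/\delta))}$ bound on each $\PP(A_G)$ and hence the upper bound on $|\GG|$. If you want to rescue your dictionary picture, you would need to recast it in exactly this sequential form, conditioning one vector at a time so that the per-step event is linear, not bilinear, in the still-random vector. A secondary, smaller discrepancy: in the top regime your Johnson--Lindenstrauss plus coordinate-wise $\eps$-net rounding yields $O(n\eps^{-2}\log n\cdot\log(1/\eps))$ bits, which exceeds the claimed $O(n\eps^{-2}\log n)$ when $\log(1/\eps)$ is unbounded; the paper avoids the extra factor by the same Gram-matrix counting argument (Lemma~\ref{l21}) rather than a net.
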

This implies the following result about dimension
reduction.
\begin{coro}
\label{c24}
There exists an absolute positive constant $c>0$ so that for
any $n \geq k>ck \geq \ell$ and for all $\eps \geq\frac{2}{\sqrt n}$,
if there is an
$(n, k, \ell, \eps)$-Euclidean dimension reduction
then $\ell = \Omega(\frac{\log (2+\eps^2 n)}{\eps^2}$.
\end{coro}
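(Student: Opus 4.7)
My plan is to derive Corollary~\ref{c24} from Theorem~\ref{t23} via the same sketch-composition argument that yields Corollary~\ref{c12} from Theorem~\ref{t11}. The key observation is that any $(n,k,\ell,\eps)$-Euclidean dimension reduction provides a way to convert an $\eps$-distance sketch in dimension $\ell$ into a $(2\eps)$-distance sketch in dimension $k$: given points $x_1,\ldots,x_n$ in the unit ball of $R^k$, first apply the dimension reduction (appending the origin as an extra point so that norms, and hence inner products, are preserved along with pairwise distances), rescale if necessary at a cost of $O(1)$ bits per point to move the images into the unit ball of $R^\ell$, and then store a standard $\eps$-distance sketch of these $\ell$-dimensional images. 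Summing the two sources of additive error yields
\[
f(n,k,2\eps) \le f(n,\ell,\eps) + O(n).
\]
Since $\eps \ge 2/\sqrt n$ forces $\eps^2 n \ge 4$, the quantity $t:=\log(2+\eps^2 n)/\eps^2$ is stable up to a constant factor under $\eps\mapsto 2\eps$, so the lower bounds of Theorem~\ref{t23} at $2\eps$ are of the same order as those at $\eps$.

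The argument then reduces to a case analysis based on where $k$ and $\ell$ lie among the three ranges of Theorem~\ref{t23}. The principal case is $k\ge t$: bullet~1 gives $f(n,k,2\eps)=\Omega(nt)$, and if $\log(2+\eps^2 n)\le \ell \le t$, combining with the upper bound $f(n,\ell,\eps)=O(n\ell\log(2+t/\ell))$ from bullet~2 and writing $\alpha:=\ell/t$ transforms the inequality into
\[
\alpha\,\log\bigl(2+1/\alpha\bigr) \ge 1/C,
\]
which forces $\alpha\ge c_0$ for an absolute constant $c_0>0$, i.e.\ $\ell=\Omega(t)$. The subcase $\ell\le\log(2+\eps^2 n)$ is ruled out because bullet~3 would give the smaller upper bound $f(n,\ell,\eps)=O(n\ell\log(1/\eps))$, which cannot match $\Omega(nt)$ when $\ell$ is this small.

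For the remaining range $k<t$, applying bullet~2 (or bullet~3) of Theorem~\ref{t23} on both sides and using the monotonicity of $u\mapsto u\log(2+t/u)$ on $(0,t]$, together with the hypothesis $\ell\le ck$, leads to a self-contradiction whenever $c$ is taken smaller than an absolute threshold. Hence no such dimension reduction exists in this regime, vacuously confirming the conclusion. The substantive inputs are Theorem~\ref{t23} and the sketch-composition principle, both already available; I expect the main obstacle to be the routine but somewhat finicky bookkeeping needed to verify the case analysis across all $(k,\ell)$ sub-regimes and to track constants carefully through the replacement of $\eps$ by $2\eps$. No genuinely new ingredient beyond Theorem~\ref{t23} is required.
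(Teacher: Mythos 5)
Your proposal follows essentially the same route the paper intends: the paper disposes of Corollary~\ref{c24} in the concluding remarks with the single line ``Corollary \ref{c24} follows from Theorem \ref{t23} using essentially the same argument,'' referring back to the sketch-composition reasoning sketched for Corollary~\ref{c12}. Your composition inequality $f(n,k,O(\eps)) \leq f(n,\ell,\eps) + O(n\log(1/\eps))$ (the extra term being $O(\log(1/\eps))$ bits per point for norms rather than $O(1)$, and one should really feed $n+1$ points --- including the origin --- through the reduction so that norms, and hence inner products, are recoverable), combined with the lower and upper bounds of Theorem~\ref{t23} across the three dimension regimes, is exactly the intended mechanism.

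One small imprecision worth flagging: in the principal case $k\ge t$, the subcase $\ell \le \log(2+\eps^2 n)$ is not literally ``ruled out.'' Bullet~3 only gives $\ell \log(1/\eps) = \Omega(t)$, while $\ell \le \log(2+\eps^2 n) = \eps^2 t$ then forces $\eps^2\log(1/\eps) = \Omega(1)$, i.e.\ $\eps$ is bounded below by an absolute constant; in that regime $\log(1/\eps)=O(1)$, and the very same inequality yields $\ell = \Omega(t)$ directly. So the conclusion still holds in that subcase; it just is not a contradiction. Similarly, in the remaining range $k<t$ you should note that the $+O(n\log(1/\eps))$ term is harmlessly absorbed once $t$ exceeds a fixed constant (and for $t=O(1)$ the conclusion $\ell=\Omega(t)$ is trivial), after which the monotonicity of $u\mapsto u\log(2+t/u)$ and the hypothesis $\ell\le ck$ do indeed produce a contradiction for $c$ below an absolute threshold. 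These are exactly the ``finicky bookkeeping'' points you anticipated, and none of them requires a new idea beyond Theorem~\ref{t23}.
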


Note that for the range of $\eps$ in which
$\log(2+\eps^2 n) =o(\log n)$ the statements of Theorem
\ref{t23} and of Corollary \ref{c24} are essentially the ones
obtained from those in Theorem \ref{t11} and Corollary \ref{c12}
by replacing the
term $\log n/{\eps^2}$
by the expression
$t=\frac{\log (2+\eps^2 n)}{\eps^2}$.
In fact, it is possible that as suggested
by Larsen and Nelson \cite{LN1} for such
small values of $\eps$ the assertion of the
Johnson-Lindenstrauss Lemma can also be improved, replacing
$\frac{\log n}{\eps^2}$ by $\frac{\log(2+\eps^2 n)}{\eps^2}$.
Motivated by this we prove a bipartite version of the result,
stated as Theorem \ref{t13} in the previous section.
We conjecture that the assertion of this  theorem can be
strengthened, as follows.
\begin{conj}
\label{c14}
Under the assumptions of Theorem \ref{t13}, the conclusion holds
together with the further requirement that
$ \|x_i\| \leq O(1)$ and $\|y_i \| \leq O(1)$ for all $1 \leq i
\leq n$.
\end{conj}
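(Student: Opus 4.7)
My plan is to attack Conjecture~\ref{c14} in two phases: first reopen the construction used to prove Theorem~\ref{t13} and try to control the output norms directly, and, if that fails, to post-process the conclusion of Theorem~\ref{t13} inside $\RR^t$.

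The first phase is to inspect the construction of Theorem~\ref{t13}. As the authors indicate in the Techniques subsection, its proof relies on the low-$M^*$ estimate, the finite volume-ratio theorem, and Harg\'e's Gaussian correlation inequality, all of which are naturally phrased in terms of a Gaussian random linear map $T : \RR^n \to \RR^t$. If the theorem is in fact proved by showing that $x_i = T a_i$ and $y_j = T b_j$ satisfy the inner-product condition with positive probability, then $\|x_i\|^2$ is a $\chi^2$-like variable of mean $\|a_i\|^2 \leq 1$, and standard Gaussian norm concentration yields $\|x_i\|, \|y_j\| \leq C$ for all $i, j$ with probability at least $1 - 2 n e^{-c t}$. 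Whenever $t \geq C' \log n$, a union bound makes this compatible with the inner-product event, and the conjecture follows immediately. The complementary regime $t < C' \log n$ corresponds to $\eps^2 n = O(1)$, and there the conclusion of Theorem~\ref{t13} is already attainable by a single standard Johnson--Lindenstrauss projection, whose Gaussian matrix automatically enjoys the norm bound.

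If the construction used in Theorem~\ref{t13} is genuinely non-linear --- for instance, if it truncates, rounds to a net, or selects coordinates adaptively --- then the phase above does not suffice, and I would fall back on post-processing. Apply Theorem~\ref{t13} with error $\eps/2$ to obtain $u_i, v_j \in \RR^t$, and try to rebalance them inside $\RR^t$: writing $M'_{ij} = \langle u_i, v_j \rangle$, the goal becomes to find an $\eps/2$-perturbation $\tilde M$ of $M'$ admitting a factorization $\tilde M = X Y^T$ with bounded row norms. The symmetric SVD choice $X = W \Sigma^{1/2}$, $Y = Z \Sigma^{1/2}$, where $M' = W \Sigma Z^T$, is the most natural candidate, whose row norms can then be estimated via the singular value distribution of $M'$.

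The main obstacle is precisely this rebalancing step. The bound $|M'_{ij}| \leq 1 + \eps/2$ does not by itself imply that $M'$ admits a bounded-norm rank-$t$ factorization: Grothendieck-type constructions show that entry-wise bounded matrices can have $\gamma_2$-factorization norm as large as $\Omega(\sqrt{\log n})$, with Hadamard-like examples realizing this growth. So one must exploit the additional structure that $M'$ is $\eps$-close to the inner-product matrix $A B^T$, whose $\gamma_2$-norm is at most $1$. Cleanly stated, the remaining problem becomes: any $n \times n$ matrix of $\gamma_2$-norm at most $1$ can be approximated in $\ell_\infty$-norm to within $\eps$ by a matrix of rank $O(\log(2 + \eps^2 n)/\eps^2)$ whose $\gamma_2$-norm is $O(1)$. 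Establishing this --- presumably by a Harg\'e-type argument applied to a Gaussian construction with simultaneous truncation of the Gaussian tails so that both inner products and norms lie in one high-probability event --- seems to be the real difficulty, and I suspect it is precisely why the authors leave Conjecture~\ref{c14} open rather than proving it.
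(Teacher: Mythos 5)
This statement is a \emph{conjecture}, and the paper explicitly leaves it open; there is no proof in the paper to compare against. Your write-up is therefore correctly calibrated in spirit: it does not claim a proof, and it arrives at the same obstruction that the authors record in their concluding remarks, namely that one is asking for a rank-$t$ matrix $\tilde M$ with $\gamma_2(\tilde M)=O(1)$ that $\eps$-approximates a matrix of $\gamma_2$-norm $1$ entrywise, and that the approximant produced by Theorem~\ref{t13} need only have $\gamma_2=O(1)$ with a constant strictly larger than $1$, which blocks the natural iteration of Theorem~\ref{thm_944}.

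That said, two things in your Phase~1 analysis are off and worth correcting. First, the construction in Theorem~\ref{t13} is not a single Gaussian map $T$ applied to both sides. The $y_j$ are indeed a linear (orthogonal) projection of $b_j$ onto a $t$-dimensional subspace and hence \emph{automatically} have bounded norm; the $x_i$, however, are chosen via Corollary~\ref{cor_1104} as arbitrary points of the non-empty intersection $E \cap (\sqrt{t}\, a_i + C K)$, and that corollary only asserts non-emptiness, with no control on where in $E$ the point lies. The asymmetry between the $x_i$ (no norm control) and the $y_j$ (free norm control) is the entire content of the conjecture. Theorem~\ref{thm_944} closes this gap only for $t=\Omega(n)$, and does so by upgrading the non-emptiness statement to a Gaussian-volume lower bound (Corollary~\ref{cor_1024}), which is exactly what yields a representative of norm $O(\sqrt{n})$ before normalization.

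Second, your claim that for $t \geq C' \log n$ a union bound makes the norm event ``compatible with the inner-product event'' does not work even hypothetically for a Gaussian map: at the conjectured dimension $t = C\log(2+\eps^2 n)/\eps^2$, the failure probability of a single pair $(i,j)$ is on the order of $1/(2+\eps^2 n)$, so a union bound over $n^2$ pairs is vacuous. One would need a positive-correlation argument (as in the proof of Theorem~\ref{thm_944}, via Khatri--Sidak and Harg\'e) rather than a union bound, and in the regime where $\eps^2 n$ is small the event one is trying to hit has exponentially small probability. This is why the Gaussian route cannot be dismissed to Phase~2 as quickly as you suggest --- it is already the hard route, and the paper proves it only in the half-dimension case. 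Your Phase~2 reformulation as a $\gamma_2$-constrained low-rank $\ell_\infty$-approximation problem is exactly the right way to state what remains to be shown.
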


Note that the assertion of the conjecture is
trivial for $\eps < \sqrt {C / (2n)}$, as in that case $t \geq n$.
Note also that for, say, $\eps>1/n^{0.49}$  the assertion holds
by the Johnson-Lindenstrauss Lemma.

If true, this conjecture, together with our methods here, suffices to
establish a
tight upper bound up to a constant factor  for the number of bits
required for maintaining all inner products between $n$ vectors
of norm at most $1$ in $R^n$, up to an additive error of $\eps$
in each product, for all $\eps \geq \frac{2}{\sqrt n}$,
closing the gap between the upper and lower bound in the first
bullet in Theorem
\ref{t23}.
The conjecture, however, remains open,
but we can
establish two results supporting it. The first is a proof of the
conjecture when $t$ is $n/2$ (or more generally $\Omega(n)$,
that is, the case $\eps=\Theta(1/\sqrt n)$). Our result is as follows:

\begin{theo}
Let $m \geq n \geq 1, \eps > 0$ and assume that
$a_1,\ldots,a_m, b_1,\ldots,b_m \in \RR^{2n}$
are points of norm at most one.
Suppose that $X_1,\ldots,X_m, Y_1,\ldots,Y_m \in \RR^n$
are independent random vectors, distributed according to
standard Gaussian law. Set $\bar{X}_i = X_i /
\sqrt{n}$ and $\bar{Y}_i = Y_i / \sqrt{n}$ for all $i$.

\medskip Assume that
$n \geq C_1 \frac{\log (2+\eps^2 m)}{\eps^2}$.
Then with probability of at least $\exp(-C_2 n m)$,
$$
 \left| \left \langle \bar{X}_i, \bar{Y}_j
\right \rangle - \left \langle a_i, b_j \right \rangle \right |
\leq \eps   \qquad \qquad \text{for} \ i,j=1,\ldots,m,
$$
and moreover $|\bar{X}_i| + |\bar{Y}_i| \leq C_3$
for all $i$. Here, $C_1, C_2, C_3 > 0$ are universal constants.
\label{thm_944}
\end{theo}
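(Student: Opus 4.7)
The plan is to compare the standard Gaussian law $\mu$ of $(\bar X, \bar Y)$ with a tilted Gaussian $\mu'$ centered at a deterministic target configuration $(x^*, y^*)$, show that the event of interest has probability at least $e^{-O(nm)}$ under $\mu'$, and transfer back via the Radon--Nikodym derivative, which will be at least $e^{-O(nm)}$ on (a subset of) the event. Since the target probability is $\exp(-C_2 nm)$, an $e^{O(nm)}$ loss in each step is affordable; the difficulty is to get each loss to be only of this order rather than, say, $e^{O(m^2)}$.

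The first step is to produce deterministic vectors $x_1^*,\ldots,x_m^*,y_1^*,\ldots,y_m^* \in \RR^n$ of norms at most a universal constant $K$ satisfying $|\langle x_i^*,y_j^*\rangle - \langle a_i,b_j\rangle|\leq \eps/2$ for all $i,j$. When $2m \leq n$, an isometric embedding of $\mathrm{span}\{a_i,b_j\}$ into $\RR^n$ suffices; otherwise, a scaled random Gaussian projection $P:\RR^{2n}\to\RR^n$ together with Hanson--Wright concentration, calibrated against the hypothesis $n\geq C_1\log(2+\eps^2 m)/\eps^2$ for $C_1$ sufficiently large, provides $x_i^* = \sqrt{2}\,P a_i$ and $y_j^* = \sqrt{2}\,P b_j$ with the desired properties. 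Under the shift $\bar X_i = x_i^* + \xi_i$, $\bar Y_j = y_j^* + \eta_j$ with $\xi_i,\eta_j \sim N(0,I/n)$ independent, the inner product decomposes as $\langle\bar X_i,\bar Y_j\rangle = \langle x_i^*,y_j^*\rangle + L_{ij} + Q_{ij}$, where $L_{ij}=\langle x_i^*,\eta_j\rangle + \langle y_j^*,\xi_i\rangle$ is centered Gaussian of variance at most $2K^2/n$ and $Q_{ij}=\langle\xi_i,\eta_j\rangle$ is bilinear of typical size $1/\sqrt{n}$.

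It suffices to show that $\{|L_{ij}|,|Q_{ij}|\leq \eps/4\ \forall i,j\} \cap \{|\xi_i|,|\eta_j| \leq 2\ \forall i,j\}$ has $\mu'$-probability at least $e^{-O(nm)}$. The linear event is an intersection of symmetric slabs in $(\xi,\eta)$-space and hence symmetric convex; the Gaussian correlation inequality (Royen) bounds its $\mu'$-measure below by the product of single-slab probabilities, each of which is $\geq 1 - e^{-c\eps^2 n/K^2}$ under the hypothesis, and the product is $\geq e^{-O(nm)}$ after calibrating $C_1$. The bilinear event is not convex, but can be handled by Hanson--Wright tail bounds for $\langle\xi_i,\eta_j\rangle$ conditional on $\xi$, reducing uniformly over $i$ to a Gaussian-max question for $\eta_j$ that is again amenable to Gaussian correlation. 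The Radon--Nikodym derivative $\log(d\mu/d\mu') = -\tfrac{n}{2}\sum_i|x_i^*|^2 - \tfrac{n}{2}\sum_j|y_j^*|^2 - n\sum_i\langle\xi_i,x_i^*\rangle - n\sum_j\langle\eta_j,y_j^*\rangle$ is then bounded below on the event: the deterministic part is $\geq -K^2 nm$, and on the further symmetric-convex event $\{|\langle\xi_i,x_i^*\rangle|,|\langle\eta_j,y_j^*\rangle|\leq 1\}$ (of high $\mu'$-probability, again by Gaussian correlation) the stochastic part adds at most $2nm$. Multiplying out gives the claimed bound $\mu(E) \geq e^{-C_2 nm}$.

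The main obstacle I anticipate is controlling the bilinear term $Q_{ij}$ uniformly in $i,j$ within the weak hypothesis $n \geq C_1 \log(2+\eps^2 m)/\eps^2$: a naive union bound over the $m^2$ pairs would require $n \gtrsim \log m/\eps^2$, which is strictly stronger whenever $\eps^2 m \ll m$. The right substitute should exploit that the noise in the $j$-th column depends only on $\eta_j$ (independent across $j$) and that the effective covariance rank of the linear part $L$ is at most $2nm$, so that the true constraint dimension is $O(nm)$ rather than $m^2$, giving a Gaussian-correlation plus concentration hybrid that matches the target order.
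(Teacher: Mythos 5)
Your step 1 --- constructing deterministic bounded-norm targets $x_1^*,\ldots,x_m^*,y_1^*,\ldots,y_m^* \in \RR^n$ with $|\langle x_i^*,y_j^*\rangle - \langle a_i,b_j\rangle|\leq \eps/2$ for all $i,j$ --- has a genuine gap that the rest of the proposal inherits. Under the hypothesis $m \geq n$ the case $2m \leq n$ never occurs, so you are always in the Gaussian-projection branch. But a random Gaussian projection $P\colon\RR^{2n}\to\RR^n$ together with Hanson--Wright and a union bound over the $m^2$ pairs has positive success probability only when $n\eps^2 \geq c\log m$, while the hypothesis guarantees only $n\eps^2 \geq C_1\log(2+\eps^2 m)$. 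These differ by an unbounded factor precisely when $\eps^2 m = O(1)$ (for instance $m=n$ and $\eps=\Theta(1/\sqrt n)$), which is a nonempty and indeed central regime of the theorem. Moreover, what step 1 asserts is exactly the bounded-norm version of the bipartite Johnson--Lindenstrauss statement: the existence of such targets with $\|x_i^*\|,\|y_j^*\|=O(1)$ is the content of Conjecture \ref{c14}, which the paper declares open, and of which Theorem \ref{thm_944} is itself presented as a partial resolution (the case of halving the dimension). Invoking that existence as an ingredient makes the argument circular.

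The paper's proof avoids deterministic targets altogether. It keeps $a_i,b_j\in\RR^{2n}$ in place and splits the error via the triangle inequality as $|\langle UX_i/\sqrt{n} - a_i, b_j\rangle| + |\langle X_i/\sqrt n, \, Y_j/\sqrt n - U^{-1}b_j\rangle|$ for an orthogonal $U\in O(2n)$ to be chosen. The convex body $K = \{x\in\RR^{2n}\ ;\ |\langle x/\sqrt n,b_j\rangle|\leq\eps \text{ for all } j\}$ satisfies $\gamma_{2n}(K)\geq e^{-n}$ by Khatri--Sidak, and the step that replaces your missing step 1 is the dual finite volume-ratio theorem (Theorem \ref{thm_fvr} through Corollary \ref{cor_1024}): there is an $n$-dimensional subspace $E$ on which every translate $\sqrt{n}\,a_i + CK$ has a section of Gaussian measure at least $c^n$. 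Taking $U(\RR^n)=E$, the event $\{UX_i \in \sqrt n\, a_i + CK$ and $|X_i|\leq C_3\sqrt n$ for all $i\}$ has probability at least $e^{-\hat C nm}$, and conditioning on it, the second summand is bounded using Khatri--Sidak again together with the translation estimate $\gamma_n(T+a)\geq e^{-\|a\|^2/2}\gamma_n(T)$. This selection-of-subspace idea is the nontrivial geometric input that a deterministic-target plus Gaussian-tilting scheme cannot produce from scratch.

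Two smaller remarks. First, the bilinear term $Q_{ij}$ you worried about is actually the more tractable obstacle: after a triangle-inequality split analogous to the paper's, each piece is linear in one Gaussian block conditional on the other, so Khatri--Sidak applies conditionally and the product of single-slab probabilities is at least $e^{-O(nm)}$ once one verifies $n\eps^2 \geq c\log(m/n)$, which does follow from the hypothesis since $\eps^2 m \geq C_1 (m/n)\log 2$ forces $\log(m/n) \leq \log(2+\eps^2 m)$. Second, the norm control $|\bar X_i|+|\bar Y_i|\leq C_3$ comes essentially for free in the paper's argument (intersect with $C_3\sqrt n\, B^{2n}$ at the cost of a $c^n$ factor per index); in your scheme the Radon--Nikodym bookkeeping requires $\|x_i^*\|,\|y_j^*\|=O(1)$ for the deterministic part of $\log(d\mu/d\mu')$ to be bounded below by $-O(nm)$, which is again exactly the missing guarantee.
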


The second result is an estimate, up to a constant factor, of the number
of bits required to represent, for a given set of $n$ vectors
$a_1,a_2, \ldots ,a_n \in \RR^k$, each of
norm at most $1$, the sequence of all inner products
$\langle a_i, y \rangle$ with a vector $y$ of norm at most $1$ in $R^k$
up to an additive error of $\eps$ in each such product. This
estimate is the same, up to a constant factor, for all
dimensions $k$ with $t \leq k \leq n$ and $t$ as in Theorem
\ref{t13}, as should
be expected from the assertion of the Conjecture.

The remainder of this paper is structured as follows.
In Section 3 we provide our first proof of the upper bound in Theorem \ref{t11},
which is  based on a short probabilistic (or volume) argument.
The second proof, presented in Section 4,
is algorithmic. It provides  an efficient randomized algorithm for
computing a sketch consisting of $O(f(n,k,\eps)/n)$ bits for each point
of $X$, so that the square of the
distance between any two points can be
recovered, up to an additive error of $\eps$, from their sketches,
in time linear in the length of the sketches.
Section 5 is concerned with the lower bound in Theorem \ref{t11}.
The results on smaller $\eps$ are proven in Section 6
using several tools  from convex geometry.
The final section 7
contains some concluding remarks and open problems.

Throughout the proofs we make
no serious attempt to optimize the absolute constants involved.
We write $c, \tilde{C}, c_1,\ldots$ etc.
for various positive universal constants, whose
values may change from one line to the next.
We usually use upper-case $C$ to denote universal
constants that we consider ``sufficiently large'',
and lower-case $c$ to denote universal constants
that are sufficiently small.
For convenience we sometimes bound $f(n,k,2 \eps)$ or
$f(n,k,5\eps)$
instead of $f(n,k,\eps)$, the corresponding bounds for
$f(n,k,\eps)$ follow, of course, by replacing $\eps$ by
$\eps/2$ or $\eps/5$ in the expressions we get, changing the
estimates only by a constant factor. All logarithms are in the
natural basis $e$ unless otherwise specified.

\section{The upper bound}
It is convenient to split the proof of the upper bound in Theorem
\ref{t11} into three lemmas, dealing with
the different ranges of $k$. The proof of the upper bound in
Theorem \ref{t23}, presented in Section 6, combines a similar
reasoning with results of Khatri, Sidak \cite{Kh}, \cite{Si} and
Harg\'e \cite{Ha} about the Gaussian correlation Inequality.
\begin{lemma}
\label{l21}
For
$ \frac{\log n}{\eps^2}  \leq k \leq n$,
$$
f(n,k,5 \eps)
\leq O(\frac{n \log n}{\eps^2}).
$$
\end{lemma}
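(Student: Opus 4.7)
The plan is to combine Johnson--Lindenstrauss (JL) dimension reduction with random-hyperplane rounding in the style of Kushilevitz--Ostrovsky--Rabani. First I would apply JL to project the input points $x_1,\ldots,x_n$ onto a random $t$-dimensional subspace with $t = \Theta(\log n / \eps^2)$; since $k \geq t$, with positive probability over this projection the resulting points $y_1,\ldots,y_n \in R^t$ satisfy $|\langle y_i, y_j \rangle - \langle x_i, x_j \rangle| \leq \eps$ for every pair $(i,j)$, and $\|y_i\| \leq 1 + \eps$ for every $i$.

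Given the $y_i$'s, the sketch of $y_i$ will have two parts: (i) the norm $\|y_i\|$, rounded to precision $\eps$ and encoded in $O(\log(1/\eps))$ bits, and (ii) a sign sketch obtained by drawing independent standard Gaussians $g_1,\ldots,g_m \in R^t$ with $m = C \log n/\eps^2$ and recording $\sigma_i = (\sgn\langle y_i, g_\ell \rangle)_{\ell=1}^{m} \in \{-1,+1\}^m$. Each point therefore uses $m + O(\log(1/\eps)) = O(\log n/\eps^2)$ bits, which summed over $n$ points yields the claimed $O(n\log n/\eps^2)$ total; the inequality $\log(1/\eps) \leq \log n \leq \log n/\eps^2$ valid in the relevant range absorbs the norm cost into the sign cost.

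For recovery I would invoke the Goemans--Williamson identity: for a standard Gaussian $g$ in $R^t$, $\Pr[\sgn\langle y_i,g\rangle \neq \sgn\langle y_j,g\rangle] = \theta_{ij}/\pi$, where $\theta_{ij}$ is the angle between $y_i$ and $y_j$. Hence $\hat\theta_{ij} := \pi \cdot d_H(\sigma_i, \sigma_j)/m$ is an unbiased estimator of $\theta_{ij}$ formed from Bernoulli summands, and Hoeffding's inequality gives $|\hat\theta_{ij} - \theta_{ij}| \leq \eps$ with probability at least $1 - 2\exp(-2 m \eps^2/\pi^2) \geq 1 - n^{-3}$ once $C$ is large enough. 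A union bound over the $\binom{n}{2}$ pairs shows that a single realization of the Gaussians works simultaneously for all pairs with positive probability, establishing existence. The reconstructed inner product $\widehat{\langle y_i, y_j\rangle} := \widehat{\|y_i\|}\,\widehat{\|y_j\|}\cos\hat\theta_{ij}$ is then within $O(\eps)$ of $\langle y_i, y_j\rangle$ by Lipschitzness of cosine and the separate norm approximations.

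The main obstacle is careful constant-tracking: three sources of error accumulate --- the JL projection (contributing $\eps$), the norm rounding (contributing roughly $2\eps$ through the product $\widehat{\|y_i\|}\,\widehat{\|y_j\|}$), and the angle estimate (contributing another $\eps$ via $|\cos\hat\theta - \cos\theta| \leq |\hat\theta - \theta|$) --- and one must choose the internal tolerances so that the sum is at most $5\eps$, matching the statement. A minor caveat is that points with $\|y_i\| \leq \eps$ have ill-defined angle; for these one simply outputs $0$ for every inner product involving $y_i$, incurring error at most $\eps$ since $|\langle y_i, y_j\rangle| \leq \|y_i\|\,\|y_j\| \leq \eps$ automatically. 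Nothing in this argument is constructive beyond the random choices of the JL subspace and the Gaussians $g_\ell$, matching the ``short probabilistic argument'' flavor announced for Section 3.
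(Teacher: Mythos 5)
Your argument is correct, but it takes a genuinely different route from the paper's Section~3 proof. The paper proves Lemma~\ref{l21} by a non-constructive packing argument: after the Johnson--Lindenstrauss reduction to dimension $m=O(\log n/\eps^2)$, it takes a maximal family $\GG$ of $\eps$-separated Gram matrices of $n$ vectors of norm at most $2$ in $\RR^m$, and bounds $|\GG|\le 2\cdot 3^{nm}$ by drawing $V_1,\dots,V_n$ uniformly from the ball of radius $3$ and showing that each event $A_G$ (the $V_i$'s nearly realize $G$) has probability at least $\tfrac12(1/3)^{nm}$; since the $A_G$ are pairwise disjoint, the bound on $|\GG|$ follows, and the sketch is just an index into $\GG$. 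Your proof instead uses explicit sign sketches $\sigma_i=(\sgn\langle y_i,g_\ell\rangle)_\ell$ together with the Goemans--Williamson identity $\Pr[\sgn\langle y_i,g\rangle\ne\sgn\langle y_j,g\rangle]=\theta_{ij}/\pi$ and Hoeffding --- this is precisely the Kushilevitz--Ostrovsky--Rabani mechanism that the paper cites (in its Related Work section) as an alternative route to the same upper bound when $k\ge\log n/\eps^2$. Both proofs are valid. The trade-off is worth noting: your approach is explicit and constructive, whereas the paper's volume argument is not; but the authors chose the volume argument precisely because (as they remark in the Techniques section) it is the one that, combined with the Khatri--Sidak and Harg\'e Gaussian-correlation inequalities, extends to the small-$\eps$ regime $\eps\ge 2/\sqrt n$ in Section~6.2, which the sign-sketch route does not obviously do. Also note that the paper's constructive proof (Section~4) is yet a third method, randomized coordinate-wise rounding to a lattice of spacing $1/\sqrt m$, so your sign-sketch proof is distinct from both of the paper's arguments. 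Your constant-tracking ($\approx\eps$ from JL, $\approx 2\eps$ from the product of rounded norms, $\approx\eps$ from the Lipschitzness of $\cos$) indeed sums to under $5\eps$, and your handling of near-zero vectors by reporting $0$ is a legitimate fix for the degenerate angle.
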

\vspace{0.1cm}

\noindent
{\bf Proof:}\, Since $f(n,k,5 \eps)$ is clearly a monotone
increasing function  of $k$, it suffices to prove the upper bound
for $k=n$. By \cite{JL} we can replace the points of $X \subset
B^k$, where $B^k$ is the unit ball in $R^k$,
by points
in $R^m$ where $m=C \frac{\log n}{\eps^2}$ so that all distances
and norms of the points change by at most $\eps$. Hence we may
and will assume that our set of points $X$ lies in $R^m$.
Note that given the squares of the norms
of two vectors up to an additive error of $\eps$ and
given their inner product up to an additive error of $\eps$
we get an approximation of the square of their distance up to an
additive error of $4 \eps$. It thus suffices to show the
existence of a sketch that can provide the approximate norm of
each of our vectors and the approximate inner products between
pairs. The approximate norms can be stored trivially by
$O( \log (1/\eps))$ bits per vector. (Note that here the cost for
storing even a much better approximation for the norms is
negligible, so if the constants are important we can ensure that
the norms are known with almost no error). It remains to prepare a
sketch for the inner products.

The Gram matrix $G(w_1,w_2, \ldots ,w_n)$  of $n$ vectors
$w_1,\ldots,w_n$ is the $n$ by $n$ matrix $G$ given by
$G(i,j)=\langle w_i, w_j \rangle$. We say that two Gram matrices $G_1,G_2$
are $\eps$-separated if there are two indices $i \neq j$
so that $|G_1(i,j) -G_2(i,j)| >  \eps$.  Let $\GG$ be a maximal
(with respect to containment) set of Gram matrices  of ordered
sequences of $n$ vectors $w_1, \ldots ,w_n$ in $R^m$, where
the norm of each vector $w_i$ is at most $2$, so that every
two distinct members of $\GG$ are $\eps$-separated. Note that by
the maximality of $\GG$, for every Gram matrix $M$
of $n$ vectors of norms at
most $2$ in $R^m$ there is a member of $\GG$ in which
all inner products of pairs of distinct points
are within $\eps$ of the corresponding inner products
in $M$, meaning that as a sketch for $M$ it suffices to
store (besides the approximate norms of the vectors),
the index of an appropriate member of $\GG$. This requires
$\log |\GG|$ bits. It remains to prove an upper bound for
the cardinality of $\GG$. We proceed with that.

Let $V_1,V_2, \ldots ,V_n$ be $n$ vectors, each chosen
randomly, independently and uniformly in the ball of
radius $3$ in $R^m$ centered at $0$. Let $T=G(V_1,V_2, \ldots ,V_n)$
be the Gram matrix of the vectors $V_i$.
For each $G \in \GG$ let $A_G$ denote the
event that for every $1 \leq i \neq j \leq n$,
$|T(i,j)-G(i,j)| < \eps/2$. Note that since the members of
$\GG$ are $\eps$-separated, all the events $A_G$ for $G \in \GG$
are pairwise disjoint. We claim that the probability of each
event $A_G$ is at least $0.5 (1/3)^{mn}$. Indeed,
fix a Gram matrix $G = G(w_1,\ldots,w_n) \in \GG$ for
some $w_1,\ldots,w_n \in \RR^m$
of norm at most $2$. For each fixed
$i$ the probability that
$V_i$ lies in the unit ball centered at $w_i$ is exactly
$(1/3)^m$. Therefore the probability that this happens for all $i$
is exactly $(1/3)^{nm}$. The crucial observation is that
conditioning on that, each vector $V_i$ is uniformly  distributed
in the unit ball centered at $w_i$. Therefore, after the
conditioning, for each $i \neq j$
the probability
that the inner product $\langle V_i-w_i, w_j \rangle$
has absolute value at
least
$\eps/4$ is at most $2e^{-\eps^2 m/64}<1/(2n^2)$. (Here we used
the fact that the norm of $w_j $ is at most $2$ and that the
constant $C$ in the definition of $m$ is sufficiently large).
Similarly, since
the norm of $V_i$ is at most  $3$, the
probability that the inner product $\langle V_i, V_j-w_j \rangle$
has absolute
value at least $\eps/4$ is at most $2 e^{-\eps^2 m/96}<1/2n^2$.
It follows that with probability bigger than $0.5(1/3)^{nm}$
all these inner products are smaller than $\eps/4$, implying that
$$
|\langle V_i, V_j \rangle- \langle w_i,w_j \rangle|
\leq | \langle V_i-w_i, w_j \rangle|
+|\langle V_i, V_j-w_j \rangle |<\eps/2.
$$
This proves that the probability of each event $A_G$ is at least
$0.5(1/3)^{nm}$, and as these are pairwise disjoint their number is
at most $2 \cdot 3^{nm}$,
completing the proof of the lemma.  \hfill $\Box$
\vspace{0.2cm}

\noindent
\begin{lemma}
\label{l22}
\item
For $\log n \leq k \leq \frac{\log n}{\eps^2}$,
$$
f(n,k,4 \eps) \leq O(nk \log (2+\frac{\log n}{\eps^2 k})).
$$
\end{lemma}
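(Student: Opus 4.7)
The plan is to adapt the probabilistic volume argument of Lemma~\ref{l21}, adjusting the radius of the localizing balls to the new parameter range. Exactly as there, the problem reduces to bounding $\log|\GG|$, where $\GG$ is a maximal $\eps$-separated family of Gram matrices of $n$-tuples of vectors of norm at most $2$ in $R^k$; the extra $O(n \log(1/\eps))$ cost for recording approximate norms is dominated by the target expression in the whole range (since $\eps \geq 1/n^{0.49}$ forces $\log(1/\eps) \leq k$). The sketch then stores the index of a member of $\GG$ whose off-diagonal entries all lie within $\eps$ of the true ones.

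To bound $|\GG|$ I would take $V_1,\ldots,V_n$ i.i.d.\ uniform in the ball of radius $3$ in $R^k$ and consider, for each $G = G(w_1,\ldots,w_n) \in \GG$, the event $A_G = \{|\langle V_i,V_j\rangle - \langle w_i,w_j\rangle| \leq \eps/2 \text{ for all } i \neq j\}$. Separation forces these events to be pairwise disjoint, so $|\GG| \leq 1/\min_G P(A_G)$. In Lemma~\ref{l21} the localizing radius could be taken as $1$, because the ambient dimension $m \asymp \log n/\eps^2$ was large enough for concentration of measure in $R^m$ to deliver $\eps$-close inner products automatically. Here, where $k$ may be as small as $\log n$, the localizing radius must shrink: I would condition on $V_i$ lying in a ball of radius $r = c\eps\sqrt{k/\log n}$ around $w_i$ (this is at most $1$ throughout the stated range), paying a conditioning probability of exactly $(r/3)^{nk}$.

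Conditional on this, each $V_i - w_i$ is uniform on a ball of radius $r$ in $R^k$. The deterministic cross term $\langle V_i-w_i, V_j-w_j\rangle$ is at most $r^2$, which is $\ll \eps$ by the choice of $r$; for the remaining terms I would use the standard tail bound $P(|\langle U, w\rangle| > t) \leq 2 e^{-c' k t^2/\|w\|^2}$ for $U$ uniform on the unit ball in $R^k$, applied with $t = \eps/(8r)$ and $\|w\| \leq 2$, which yields a per-pair failure probability of at most $1/(4 n^2)$. A union bound over the $O(n^2)$ pairs then gives $P(A_G) \geq (r/3)^{nk}/2$, so $\log|\GG| \leq nk \log(3/r) + O(1)$; substituting $r = c\eps\sqrt{k/\log n}$ this simplifies to $\tfrac12 nk \log(\log n/(\eps^2 k)) + O(nk) = O(nk \log(2 + \log n/(\eps^2 k)))$, which is the required bound.

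The only genuinely new ingredient beyond Lemma~\ref{l21} is the calibration of $r$, and this is where I expect the main obstacle to lie. The scaling $r \asymp \eps\sqrt{k/\log n}$ is the unique one for which the exponent $k t^2/\|w\|^2$ in the concentration bound is of order $\log n$---just enough for a union bound over $n^2$ pairs to go through---while simultaneously keeping $(3/r)^{nk}$ small enough for the final counting estimate. It is precisely this two-sided balance that produces the $\log(2 + \log n/(\eps^2 k))$ factor in place of the cruder $\log(1/\eps)$ that a naive coordinate-wise discretization would yield.
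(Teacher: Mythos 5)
Your overall strategy is exactly the paper's: reduce to bounding a maximal $\eps$-separated family $\GG$ of Gram matrices, condition on each $V_i$ landing in a small ball around $w_i$, and use concentration to show the inner products match. You also identified the correct localizing radius $r \asymp \eps\sqrt{k/\log n}$ (the paper writes $k = \delta^2 \eps^{-2}\log n$ and uses balls of radius $\delta/20$, which is the same thing), and your final counting arithmetic is right.

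However, there is a genuine gap in how you handle the inner-product error. You decompose $\langle V_i, V_j\rangle - \langle w_i,w_j\rangle$ into three terms, treating $\langle V_i - w_i, V_j - w_j\rangle$ as a deterministic cross term bounded by $r^2$, and you assert that $r^2 \ll \eps$. This is false near the top of the range: with $r = c\eps\sqrt{k/\log n}$ and $k = \log n/\eps^2$ you get $r = c$, so $r^2 = c^2 = \Theta(1)$, which is not $\ll \eps$ once $\eps$ is small (and $\eps$ may go down to $n^{-0.49}$). The deterministic bound on the cross term is therefore not good enough to close the argument uniformly over $\log n \leq k \leq \log n/\eps^2$.

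The paper sidesteps this by using a two-term telescoping decomposition
$\langle V_i, V_j\rangle - \langle w_i,w_j\rangle = \langle V_i - w_i, w_j\rangle + \langle V_i, V_j - w_j\rangle$,
and bounds \emph{each} summand by the same concentration estimate, using in the second term only that $|V_i|$ is bounded by a constant (say $\leq 2$) after the conditioning. Since both terms are then of the form $\langle u, Z\rangle$ with $\|u\| = O(1)$ and $Z$ uniform in a ball of radius $r$, the tail bound $2e^{-c' k (\eps/r)^2}$ gives probability $\leq 1/(2n^2)$ throughout the whole range, and no separate cross term ever appears. Alternatively, you could keep your three-term decomposition but apply concentration to the cross term as well (conditioning on one of the two small vectors); the deterministic bound $r^2$ is simply too crude. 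Either fix restores the proof; the rest of your argument, including the choice of $r$ and the final bound $\log|\GG| = O(nk\log(2 + \log n/(\eps^2 k)))$, is sound.
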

\vspace{0.1cm}

\noindent
{\bf Proof:}\, The proof is nearly identical to the second
part of the proof above. Note, first, that by monotonicity and the
fact that the expressions above change  only by a constant factor
when $\eps$ changes by a constant factor,
it suffices to prove the required bound for
$k=\frac{\delta^2}{\eps^2} \log n$ where $2 \eps \leq \delta
\leq 1/2$.
Let $\GG$ be a maximal set of $\eps$-separated Gram matrices of
$n$ vectors of norm at most $1$ in $R^k$. (Here it suffices to
deal with norm $1$ as we do not need to start with
the Johnson-Lindenstrauss Lemma which may slightly increase norms).
In order to prove an upper
bound for $\GG$ consider, as before, a fixed Gram matrix
$G=G(w_1, \ldots ,w_n)$ of $n$ vectors of norm at most
$1$ in $R^k$.
Let $V_1,V_2, \ldots ,V_n$ be random vectors
distributed  uniformly and independently in the ball of radius
$2$ in $R^k$, let $T$ denote their Gram matrix,
and let $A_G$ be, as before, the event that $T(i,j)$ and $G(i,j)$
differ by less than  $\eps/2$ in each non-diagonal entry.
The probability that each $V_i$ lies in the ball of
radius, say, $\delta /20$ centered at $w_i$ is exactly
$(\delta/40)^{kn}$. Conditioning on that, the probability that
the inner product $ \langle V_i-w_i, w_j \rangle$ has absolute value at
least
$\eps/4$ is at most
$$
2e^{-\eps^2 400 k/32 \delta^2}<1/(2n^2).
$$
Similarly, the
probability that the inner product $\langle V_i, V_j-w_j \rangle$
has absolute
value at least $\eps/4$ is at most
$$
2 e^{-\eps^2 400 k/64 \delta^2}<1/2n^2.
$$
As before, this implies that $|\GG| \leq 2 (40/\delta)^{kn}$,
establishing the assertion of the lemma.  \hfill $\Box$
\vspace{0.2cm}

\noindent
\begin{lemma}
\label{l23}
\item
For $k \leq \log n$,
$$
f(n,k,\eps) \leq O(nk \log (1/\eps)).
$$
\end{lemma}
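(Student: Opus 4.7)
The plan is to mimic the volume-probabilistic template used in the proofs of Lemmas \ref{l21} and \ref{l22}. As there, after storing an approximate norm of each point using $O(\log(1/\eps))$ bits per point, it suffices to upper bound $\log|\GG|$, where $\GG$ is a maximal family of pairwise $\eps$-separated Gram matrices of $n$-tuples $w_1,\ldots,w_n \in R^k$ with $\|w_i\| \leq 1$.

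To estimate $|\GG|$, fix $G = G(w_1,\ldots,w_n) \in \GG$, let $V_1,\ldots,V_n$ be independent uniform random vectors in the ball of radius $2$ in $R^k$, and let $A_G$ denote the event that $\|V_i - w_i\| \leq \eps/8$ for every $i$. By independence, $\Pr[A_G] = (\eps/16)^{nk}$. On the event $A_G$, the triangle-type estimate
$$
|\langle V_i, V_j\rangle - \langle w_i, w_j \rangle|
\leq \|V_i - w_i\|\cdot\|V_j\| + \|w_i\|\cdot\|V_j - w_j\|
\leq \eps/2
$$
holds for all $i \neq j$, so the random Gram matrix $G(V_1,\ldots,V_n)$ agrees with $G$ to within $\eps/2$ in every off-diagonal entry. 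By $\eps$-separation of the members of $\GG$, the events $\{A_G\}_{G\in\GG}$ are therefore pairwise disjoint, and hence $|\GG| \leq (16/\eps)^{nk}$. Thus $\log|\GG| = O(nk \log(1/\eps))$, which together with the $O(n\log(1/\eps))$ bits for the norms produces a sketch of total size $O(nk\log(1/\eps))$ approximating every inner product (and hence every squared distance, up to a constant factor in $\eps$) to within $\eps/2$; rescaling $\eps$ by a constant yields the lemma.

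The only substantive deviation from the proof of Lemma \ref{l22} is the following. There, $k \geq \log n$ allowed the exponential concentration $e^{-c\eps^2 k}$ of a single inner product to beat a union bound over the $n^2$ pairs $(i,j)$. In the present regime $k \leq \log n$ this is no longer true, so instead I impose the more rigid pointwise condition $\|V_i - w_i\| \leq \eps/8$, which deterministically controls all $\binom{n}{2}$ inner products simultaneously at the probabilistic cost $(\eps/16)^{nk}$ — precisely the right order for the target bound. An equivalent but perhaps more transparent route is to sketch each $w_i$ by the index of its nearest element in an $\eps$-net of the unit ball of $R^k$, whose cardinality is at most $(3/\eps)^k$ by the standard volume comparison. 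I anticipate no serious obstacle in either formulation beyond the routine choice of constants so that the inner-product estimate cleanly yields additive error $\eps$.
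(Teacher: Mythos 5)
Your argument is correct, but your primary route differs from what the paper actually does for Lemma \ref{l23}: there the Gram-matrix/maximal-packing machinery is dropped entirely, and the proof is a one-liner -- fix an $\eps/2$-net of size $(1/\eps)^{O(k)}$ in the unit ball of $R^k$ and sketch each point by the index of its nearest (or any $\eps/2$-close) net element, giving $O(k\log(1/\eps))$ bits per point immediately. You instead transplant the probabilistic packing template from Lemmas \ref{l21} and \ref{l22}, correctly observing that the exponential concentration of a single inner product no longer survives the union bound over $n^2$ pairs when $k\leq\log n$, and replacing it with the deterministic rounding condition $\|V_i-w_i\|\leq\eps/8$, whose probability is exactly $(\eps/16)^{nk}$; this yields $|\GG|\leq(16/\eps)^{nk}$ and hence the bound. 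That is sound, but it is really the volume estimate for the size of an $\eps$-net rewritten as a disjoint-events argument. You note the net formulation as an equivalent alternative at the end -- that is precisely the paper's proof, and it is cleaner here for one extra reason: unlike in Lemmas \ref{l21} and \ref{l22}, where the perturbed vector $V_i$ typically has a very different norm from $w_i$ (forcing separate norm storage), a genuine net point preserves the norm as well, so the extra $O(\log(1/\eps))$ bits per point you set aside for norms are not needed. Both approaches prove the lemma; yours buys uniformity with the rest of Section 3 at the cost of a detour.
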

\vspace{0.1cm}

\noindent
{\bf Proof:}\,
Fix an $\eps/2$-net of size $(1/\eps)^{O(k)}$
in the unit ball in $R^k$.
The sketch here is simply obtained by representing each point by
the index of its closest neighbor in the net.  \hfill $\Box$

\section{An algorithmic proof}

In this section  we present an algorithmic proof of the upper bound
of Theorem \ref{t11}. We first reformulate the theorem in its
algorithmic version. Note that the first part also follows from the
results in \cite{KOR}.
\begin{theo}
\label{t31}
For all $n$ and $\frac{1}{n^{0.49}} \leq \eps \leq 0.1$ there
is a randomized algorithm that given a set of $n$ points in
the $k$-dimensional unit ball $B^k$
computes, for each point, a sketch of $g(n,k,\eps)$ bits.
Given two sketches, the square of the
distance between the points can be recovered
up to an additive error of $\eps$ in time
$O(\frac{\log n}{\eps^2})$ for $\frac{\log n}{\eps^2} \leq k \leq
n$ and in time
$O(k)$ for all smaller $k$.
The function $g(n,k,\eps)$ satisfies the following
\begin{itemize}
\item
For
$ \frac{\log n}{\eps^2}  \leq k \leq n$,
$$
g(n,k,\eps)
=\Theta(\frac{\log n}{\eps^2})
$$
and the sketch for a given point can be computed in time
$O(k \log k+ \log^3 n/\eps^2)$.
\item
For $\log n \leq k \leq \frac{\log n}{\eps^2}$,
$$
g(n,k,\eps)=\Theta (k \log (2+\frac{\log n}{\eps^2 k})).
$$
and the sketch for a given point can be computed in time
linear in its length.
\item
For  $1 \leq k \leq \log n$,
$$
g(n,k,\eps)=\Theta(k \log (1/\eps))
$$
and the sketch for a given point can be computed in time
linear in its length.
\end{itemize}
In all cases the length of the sketch is optimal up to a constant
factor.
\end{theo}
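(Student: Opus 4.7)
Plan: I would treat each of the three ranges of $k$ in Theorem~\ref{t31} separately, always producing the sketch as a rounded image of the input point under some explicit random linear map, and always recovering $\langle \hat x, \hat y\rangle$ as a proxy for $\langle x, y\rangle$ in time linear in the sketch length. The matching lower bounds come for free from Theorem~\ref{t11}. For the easy range $k \leq \log n$, a deterministic product-grid scheme suffices: snap each coordinate of $x \in B^k$ to the nearest multiple of $\eps/\sqrt{k}$. The per-coordinate error is at most $\eps/(2\sqrt{k})$, whence $\|\hat x - x\| \leq \eps/2$ and the inner-product error is $O(\eps)$ by Cauchy--Schwarz. Since $\eps \geq n^{-0.49}$ and $k \leq \log n$ force $\log k = O(\log(1/\eps))$, the encoding uses $O(k \log(1/\eps))$ bits and is computable in linear time.

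For the middle range $\log n \leq k \leq (\log n)/\eps^2$ I would apply a single random orthogonal rotation $U$ to every input point, realised as a Fast-JL-style transform so that $x \mapsto Ux$ takes $O(k \log k)$ time. The rotation spreads the mass of $Ux$ so that with probability at least $1-1/n^2$ all coordinates of all $n$ rotated vectors are bounded in absolute value by $M = O(\sqrt{(\log n)/k})$, by a standard sub-Gaussian tail bound. I would then apply randomized (unbiased) rounding to each coordinate of $Ux$ on a grid of spacing $\delta/\sqrt{k}$, clipped to $[-M,M]$, with $\delta = \Theta(\eps \sqrt{k/\log n})$. The pairwise error $\langle \hat x, \hat y\rangle - \langle Ux, Uy \rangle$ decomposes into independent mean-zero terms, and Bernstein's inequality applied with per-coordinate variance $O(\delta^2/k)$ (rather than only the $\ell_\infty$ bound) yields error $O(\eps)$ after a union bound over all $\binom{n}{2}$ pairs. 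Each clipped-and-quantized coordinate then fits in $O(\log(2 + (\log n)/(\eps^2 k)))$ bits.

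For the large range $k \geq (\log n)/\eps^2$, I would first apply the Ailon--Chazelle Fast Johnson--Lindenstrauss transform to reduce the ambient dimension from $k$ to $t = \Theta((\log n)/\eps^2)$ in time $O(k \log k + \log^3 n/\eps^2)$, preserving all inner products up to additive $\eps/2$, and then feed the resulting $t$-dimensional points into the middle-range construction at dimension $t$. Since the middle-range bit budget degenerates to $O(t) = O((\log n)/\eps^2)$ when $k=t$, this matches the target $g(n,k,\eps)$; recovery remains linear in the sketch length.

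The principal obstacle is the middle-range analysis. A naive Hoeffding-type bound based on the $\ell_\infty$ size of the rounding error would inflate the bit budget by an extra factor of roughly $\log\log n$ (or $\log(1/\eps)$) above the target $O(k\log(2+(\log n)/(\eps^2 k)))$. The correct proof must combine the sub-Gaussian tail of the rotated coordinates — which justifies the clipping level $M$ — with a Bernstein-type bound on the quadratic inner-product error that exploits the \emph{variance} of the rounding noise, not merely its supremum. This is the algorithmic counterpart of the volume / Gram-matrix counting argument used in the proofs of Lemmas~\ref{l21}--\ref{l23}, and is what makes the per-coordinate bit cost shrink to $O(1)$ as $k$ approaches $(\log n)/\eps^2$.
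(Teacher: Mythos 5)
Your high-level decomposition (three ranges, fast JL then randomized rounding, linear-time recovery by dot-producting the rounded sketches, lower bounds imported from Theorem~\ref{t11}) matches the paper, but your middle-range argument has a genuine gap, and the extra machinery you introduce to fix it --- random rotation, clipping, Bernstein --- does not actually close it.

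The diagnosis is off. Plain Hoeffding already controls the inner-product error: with grid spacing $g=\delta/\sqrt k$ and $\delta=\Theta(\eps\sqrt{k/\log n})$, the quantity $\langle V_i-w_i,w_j\rangle$ is a sum of $k$ independent mean-zero terms whose squared ranges sum to $g^2\|w_j\|^2\le g^2$, so the failure probability is at most $2e^{-c\eps^2/g^2}\le n^{-5}$. This is exactly what the paper does, with no rotation and no Bernstein. The genuine difficulty is the \emph{bit count}, and your scheme does not solve it: after rotating and clipping to $[-M,M]$ with $M=\Theta(\sqrt{\log n/k})$, the number of grid points per coordinate is $\Theta(M/g)=\Theta(\sqrt{\log n}/\delta)$, whose logarithm is $\tfrac12\log\log n+\log(1/\delta)$. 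That is \emph{not} $O\bigl(\log(2+\log n/(\eps^2k))\bigr)=O(\log(1/\delta)+1)$ in the regime where $\eps$ is a constant and $k$ is close to $\log n/\eps^2$, since there $\delta=\Theta(1)$ and the per-coordinate budget must drop to $O(1)$. Rotation merely replaces $\tfrac12\log k$ by $\tfrac12\log\log n$, and Bernstein is a tail inequality that does nothing for encoding, so a fixed-length per-coordinate scheme still loses a $\Theta(\log\log n)$ factor. The same objection applies in your large-$k$ range after the JL step.

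The missing idea is a variable-length encoding of the rounded vector that exploits the $\ell_2$-norm constraint. Writing $V_i^{(\ell)}=\pm m_\ell\cdot \delta/\sqrt k$, one has $\sum_\ell m_\ell^2\le 4k/\delta^2$ since $\|V_i\|\le 2$, and by concavity of $\log$ this forces $\sum_\ell\log(1+m_\ell)\le O(k\log(2+1/\delta))$; encoding each $m_\ell$ with a self-delimiting code of length $O(1+\log(1+m_\ell))$ then gives $O(k\log(2+\log n/(\eps^2 k)))$ bits with no rotation at all, and in particular $O(1)$ bits for the typical coordinate with $m_\ell\in\{0,1\}$. This is the paper's net $N(k,\delta)$ and its bit-counting estimate at the start of Section~4; it is the combinatorial counterpart of your observation that the volume argument of Lemmas~\ref{l21}--\ref{l23} must ``see'' more than $\ell_\infty$ information. (A rotation followed by an entropy code tuned to the near-Gaussian coordinate distribution would also work, but it is heavier than needed.) One smaller point: randomized rounding does not preserve $\|V_i\|^2\approx\|w_i\|^2$, so to recover squared distances you must store $\|w_i\|$ separately to $O(\log(1/\eps))$ bits, as the paper emphasizes in its concluding remarks.
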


As before, it is convenient to deal with the
different possible ranges for $k$ separately. Note first that
the proof given in Section 2 for the range  $k \leq \log n$ is
essentially constructive, since it is well known (see, for example
\cite{ALSV} or the argument below) that there are explicit constructions of
$\eps$-nets of size $(1/\eps)^{O(k)}$ in $B^k$, and it is enough
to round each vector to a point of the net which is $\eps$-close to
it (and not necessarily to its nearest neighbor).

For  completeness we include a short description of a
$\delta$-net which will also be used later.
For $0 < \delta <1/4$ and for $k \geq 1$ let $N=N(k,\delta)$ denote
the set of all vectors of Euclidean norm at most $1$ in which every
coordinate is an integral multiple of $\frac{\delta}{\sqrt k}$.
Note that each member of $N$ can be represented by $k$ signs
and $k$ non-negative integers $n_i$ whose sum of squares is at most $k/\delta^2$.
Representing each number by its binary representation
(or by two bits, say, if it is $0$ or  $1$)
requires at most
$2 k+\sum_i \log_2 n_i$ bits, where the summation is over all
$n_i \geq 2$. Note that $\sum_i \log_2 n_i =0.5 \log_2 (\Pi_i n_i^2)$
which is maximized when all numbers are equal and gives
an upper bound of $k \log_2 (1/\delta)+2k$ bits per member of the
net. Given a vector in $B^k$ we can round it to a vector
of the net that lies within distance $\delta/2$ from it by simply
rounding each coordinate to the closest integral multiple of
$\delta/\sqrt k$. The computation of the distance between two
points of the net takes time $O(k)$. The size of the net is
$(1/\delta)^k2^{O(k)}$, as each point is represented by
$k \log_2 (1/\delta)+2k$ bits and $k$ signs.

The above description of the net suffices to prove Theorem
\ref{t31} for $k \leq \log n$. We proceed with the proof for
larger $k$.

For $k \geq \frac{40 \log n}{\eps^2}$ we first
apply the Johnson-Lindenstrauss Lemma (with the fast version
described in \cite{AC}) to project the points to $R^m$
for $m=40 \log n/\eps^2$ without changing any square distance or norm
by more than $\eps$. It is convenient to now shrink all vectors
by a factor of $1-\eps$ ensuring they all lie in the unit ball
$B^m$ while the square distances, norms and inner products are still
within $3\eps$ of their original values. We thus may assume from
now on that all vectors lie in $B^m$.

As done in Section 2, we handle norms separately, namely, the
sketch of each vector contains some $O(\log (1/\eps))$ bits
representing a good approximation for its norms. The rest of the
sketch, which is its main part, will be used for recovering
approximate inner products between vectors. This is done
by replacing  each of our vectors $w_i$
by a randomized rounding of it chosen as
follows. Each coordinate of the vector, randomly and independently,
is rounded to one of the two closest integral multiples of
$1/\sqrt m$, where the probabilities are chosen so that its
expectation is the original value of the coordinate.  Thus, if
the value of a coordinate is $(i+p)/\sqrt m$ with $0 \leq p \leq 1$
it is rounded to $i/\sqrt m$ with probability $(1-p)$ and to
$(i+1)/\sqrt m$ with probability $p$. Let $V_i$ be the random vector
obtained from $w_i$ in this way.  Then the expectation of
each coordinate of $V_i-w_i$ is zero. For each $j \neq i$
the random variable $\langle V_i-w_i, w_j \rangle$ is a sum
of $m$ independent random
variables where the
expectation of each of them is $0$ and the sum of squares of the
difference between the maximum value of each random variable and
its minimum value is the square of the norm of $w_j$ divided by
$m$. Therefore this sum is at most $1/m$, and by Hoeffding's
Inequality (see \cite{Ho}, Theorem 2) the probability that this inner
product is in absolute value
at least $\eps/2$ is at most $2e^{-\eps^2 m/8}$ which is
smaller than $1/n^5$. Similar reasoning shows that the
probability that $\langle V_i, V_j-w_j \rangle$
is of absolute value at least
$\eps/2$ is smaller than $1/n^5$. As in the proof in Section 2,
it follows that with probability
at least $1-2/n^3$ all inner products of  distinct vectors in
our rounded set lie within  $\eps$ of their original values,
as needed. The claims about the running time  follow from
\cite{AC} and the description above.
This completes the proof of the first part of Theorem
\ref{t31}.

The proof of the second part is essentially identical
(without the projection step using the Johnson-Lindenstrauss
Lemma). The only difference is in the parameters. If
$k=\frac{40 \delta^2 \log n}{\eps^2}$ with $ \eps \leq \delta \leq
1/2$ we round each coordinate randomly to one of the two
closest integral multiples of $\delta/\sqrt k$, ensuring the
expectation will be the original value of the coordinate.
The desired result follows as before, from the Hoeffding
Inequality. This completes the proof of Theorem \ref{t31}.
\hfill $\Box$

\section{The lower bound}
\begin{lemma}
\label{l31}
If
$$
k=\delta^2 \log n/ (200 \eps^2 )
$$
where $2 \eps \leq \delta \leq 1/2$,
then
$f(n,k,\eps/2)  \geq \Omega(kn \log (1/\delta)$
\end{lemma}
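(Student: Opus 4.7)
The plan is to produce, by a Gilbert--Varshamov style probabilistic construction, a family $\mathcal{F}$ of at least $(c/\delta)^{\Omega(nk)}$ configurations of $n$ unit vectors in $\RR^k$ whose Gram matrices are pairwise separated by more than $\eps$ in the entry-wise maximum norm. Since any $(\eps/2)$-distance sketch must assign distinct code words to any two such configurations, this will yield $f(n,k,\eps/2) \geq \log_2|\mathcal{F}| = \Omega(nk\log(1/\delta))$.

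Concretely, fix a maximal $\delta$-separated subset $T$ of the unit sphere $S^{k-1}$; a standard volume estimate gives $|T| \geq (c_1/\delta)^{k-1}$ for an absolute constant $c_1 > 0$. Set $m = \lfloor n/2 \rfloor$ and let $v_1,\ldots,v_m$ be ``reference'' vectors chosen independently and uniformly on $S^{k-1}$; the remaining $n - m$ ``signal'' vectors $v_{m+1},\ldots,v_n$ will range freely over $T$, giving $|T|^{n-m}$ candidate configurations. The heart of the argument will be the following estimate: with positive probability over the choice of references, for every pair $s \neq s' \in T$,
\[
\max_{1 \leq j \leq m} \, |\langle v_j, s - s'\rangle| \;>\; \eps.
\]
Granting this, two signal sequences that disagree at some position $i$ produce Gram matrices differing by more than $\eps$ at some entry $(m+i, j)$ with $j \leq m$, forcing the $(\eps/2)$-sketch to distinguish them, and the lemma follows.

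To prove the estimate, fix $s \neq s' \in T$ and let $w = s - s'$, so $|w| \geq \delta$. For $v_j$ uniform on $S^{k-1}$, $\langle v_j, w\rangle$ is (to good approximation) a centered Gaussian of variance $|w|^2/k$, and the precise calibration $k = \delta^2 \log n / (200\eps^2)$ forces the normalized threshold $\eps\sqrt{k}/|w| \leq \sqrt{(\log n)/200}$. A standard Gaussian lower tail bound therefore gives
\[
\Pr\!\left[\,|\langle v_j, w\rangle| > \eps\,\right] \;\geq\; c_2\, n^{-1/400}/\sqrt{\log n},
\]
and independence of the references then yields
\[
\Pr\!\left[\,\max_j |\langle v_j, w\rangle| \leq \eps\,\right] \;\leq\; \exp\!\bigl(-c_2\, n^{1 - 1/400}/(2\sqrt{\log n})\bigr).
\]
A union bound over the $|T|^2 \leq (c_1/\delta)^{2k}$ pairs $(s,s') \in T \times T$ completes the proof, using that $\log|T|^2 = O((\log n)/\eps^2)$ is comfortably smaller than $n^{1-1/400}/\sqrt{\log n}$ in the admissible range (e.g., whenever $\eps \geq n^{-0.49}$).

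The main obstacle is precisely this numerical calibration: the factor $1/200$ in the definition of $k$ is tuned so that the Gaussian tail exponent $1/400$ remains strictly below the exponent $1 - o(1)$ arising from the $m = \Theta(n)$ references, and this gap is exactly what powers the union bound. Once the gap is in place, the remaining steps --- the Gaussian approximation of the spherical marginal, and the passage from ``every pair in $T$ is distinguished by some reference'' to ``every pair of signal sequences yields $\eps$-separated Gram matrices'' --- are routine.
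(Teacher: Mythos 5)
Your proposal takes essentially the same route as the paper's own proof: a fixed random set of $\Theta(n)$ ``reference'' vectors, a $\delta$-separated net over which the remaining $\Theta(n)$ ``signal'' vectors range, and a probabilistic existence argument (Gaussian/spherical-cap lower tail plus a union bound over all pairs in the net) showing the references distinguish every pair of net points by more than $\eps$ in inner product, whence $f(n,k,\eps/2) \geq \Omega(n\log|T|) = \Omega(nk\log(1/\delta))$. The only differences from the paper are cosmetic: you place everything on the sphere $S^{k-1}$ rather than the ball $B^k$, and you use a slightly sharper cap-measure estimate (the paper is content with the cruder $e^{-1.5\eps^2 k/\delta^2} = n^{-3/400}$); both are correct, and both rely, as you note, on the implicit range $\eps \geq n^{-0.49}$ of Theorem~\ref{t11} to make the union bound close.
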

\vspace{0.1cm}

\noindent
{\bf Proof:}\,
Fix a maximal set of points $N$ in the unit ball $B^k$ of $R^k$
so that the
Euclidean distance between any two of them is at least
$\delta$. It is easy and well known that the size of $N$ is
$(1/\delta)^{(1+o(1))k}$ (where the $o(1)$-term tends to $0$ as
$\delta$ tends to $0$). For the lower bound we construct
a large number of $\eps$-separated Gram matrices of $n$ vectors in
$B^k$. Each collection of $n$ vectors consists of a fixed set $R$ of
$n/2$ vectors, whose existence is proved below, together with
$n/2$ points of the set $N$. The set $R$ of fixed points will
ensure that  all the corresponding Gram matrices are $\eps$-separated.

We claim that there is a choice
of a set $R$ of $n/2$ points in $B^k$ so that the
inner products  of any two distinct points from $N$ with some
point of $R$ differ by more than $\eps$. Indeed, for any two
fixed
points of $N$, the difference between them has norm at least
$\delta$, hence
the probability that the product of a random point of $B^k$ with this
difference  is bigger than $\eps$ is at least
$e^{-1.5 \eps^2 k/ \delta^2 }$ (with room to spare).
It thus suffices to have
$$
(1-e^{-1.5 \eps^2 k/ \delta^2 })^{n/2} < 1/|N|^2
$$
hence the following will do:
$$
(n/2) e^{-2 \eps^2 k/ \delta^2 } > (2+o(1)) k \log (1/\delta).
$$
Thus it suffices to have
$$
2 \eps^2 k/ \delta^2 < \log (n/5 k \log (1/\delta))
$$
and as the left hand side is equal to $(\log n)/100$ this indeed
holds. Thus a set $R$ with the desired properties exists.

Fix a set $R$ as above. Note that every two distinct choices of
ordered sets of $n/2$ members of $N$ provide $\eps$-separated
Gram matrices. This implies that
$$
f(n,k,\eps/2) \geq \log
|N|^{n/2}
=\Omega(n \log |N|) = \Omega(n k \log (1/\delta)),
$$
completing the proof of the lemma. \hfill $\Box$

By monotonicity and the case $\delta=1/2$ in the above Lemma
the desired lower bound in Theorem \ref{t11} for all
$k \geq \log n$ follows.

It remains to deal with smaller $k$. Here we fix a
set $N$ of size $(1/ 2 \eps)^{(1+o(1))k}$ in $B^k$ so that the distance
between any two points is at least $2 \eps$. As before,
the inner products with all members of a
random  set $R$ of $n/2$
points distinguishes, with high probability,
between any two members of $N$ by more than
$\eps$. Fixing $R$  and adding to it in all possible ways
an ordered set of $n/2$ members  of $N$ we conclude that
in this range
$$
f(n,k,\eps/2) \geq \log (|N|^{n/2})=
\Omega( nk \log (1/\eps))
$$
completing the proof of the lower bound and hence that of Theorem
\ref{t11}.  \hfill $\Box$

We conclude this section by observing that the proof of the lower
bound implies that the size of the sketch per point given by Theorem
\ref{t31} is tight, up to a constant factor, for all admissible
values of the parameters. Indeed, in the lower bounds we  always
have a fixed set $R$ of $n/2$ points and a large net $N$, so that
if our set contains all the points of $R$ then no two distinct
points of $N$ can have the same sketch, as for any two distinct
$u,v \in N$  there is a
member of $R$ whose inner products with $u$ and with $v$ differ by
more than $\eps$. The lower bound for the length of the sketch is
thus $\log |N|$, by the pigeonhole principle.

\section{Small distortion}

In this section we prove several
results related  the case of smaller $\eps$. In Section 6.1 we
prove a tight estimate for the number
of bits needed to represent $\eps$-approximations of all inner products
$\langle a_1, y \rangle,\ldots, \langle a_n, y \rangle$
for a vector $y \in \RR^k$ of norm at most $1$,
where $a_1,a_2, \ldots ,a_n \in \RR^k$
are fixed vectors of norm at most $1$.
In Section 6.2 we present the proof of Theorem \ref{t23}.
In Section 6.3 we prove Theorem \ref{thm_944}, while in Section 6.4
we prove Theorem \ref{t13}. The techniques here are more
sophisticated than those in the previous sections, and rely on several
tools from convex geometry.

\subsection{Inner products with fixed vectors}

\begin{theo}
\label{t42}
Let $a_1,a_2, \ldots ,a_n$ be vectors of norm at most $1$ in
$R^k$. Suppose $\eps  \geq \frac{2}{\sqrt n}$ and assume that
$$
\frac{\log (2+\eps^2 n)}{8\eps^2} \leq k \leq n.
$$
Then, for a vector $y$ of norm at most $1$
the number of bits required to represent all inner products
$\langle a_i, y \rangle$ for all
$1 \leq i \leq n$ up to an additive error of
$\eps$ in each such product is
$$
\Theta \left( \frac{\log (2+\eps^2 n)}{\eps^2} \right).
$$
Equivalently, the number of possibilities of the vector
$$
\left(\lfloor \frac{\langle a_1,y \rangle}{\eps} \rfloor,
\lfloor \frac{\langle a_2,y \rangle}{\eps} \rfloor,  \cdots ,
\lfloor \frac{\langle a_n,y \rangle}{\eps} \rfloor \right)
$$
for vectors $y$ of norm at most $1$ is
$$
2^{\Theta( \frac{\log (2+\eps^2 n)}{\eps^2})}.
$$
\end{theo}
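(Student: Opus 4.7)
The plan is to prove both bounds via a Gaussian packing argument combined with the Khatri--Sidak Gaussian correlation inequality; the hypothesis $\eps \ge 2/\sqrt{n}$ will enter through the optimal choice of Gaussian variance.

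For the upper bound, let $M$ denote the maximal cardinality of a subset $\{y_1,\dots,y_M\}\subseteq B^k$ for which for every $j\neq j'$ some $i$ satisfies $|\langle a_i, y_j-y_{j'}\rangle|>\eps$; clearly the number of distinct quantized signatures is at most $M$. I would introduce a Gaussian $G\sim N(0,\sigma^2 I_k)$ and the symmetric strip body $T=\{w\in R^k:|\langle a_i,w\rangle|\le \eps/2\ \forall i\}$. The events $E_j:=\{G\in y_j+T\}$ are pairwise disjoint, since their joint occurrence would force $|\langle a_i,y_j-y_{j'}\rangle|\le \eps$ for every $i$, so $\sum_j \Pr(E_j)\le 1$ and $M\le 1/\min_j \Pr(E_j)$.

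To bound each $\Pr(E_j)$ from below I would combine two ingredients. Expanding the Gaussian density and using the symmetry of $T$ together with $\cosh\ge 1$ gives the shift bound $\Pr(G\in y_j+T)\ge e^{-|y_j|^2/(2\sigma^2)}\Pr(G\in T)\ge e^{-1/(2\sigma^2)}\Pr(G\in T)$. The Khatri--Sidak inequality applied to the slabs $\{|\langle a_i,G\rangle|\le \eps/2\}$ together with $|a_i|\le 1$ then yields $\Pr(G\in T)\ge (2\Phi(\tau)-1)^n$ with $\tau:=\eps/(2\sigma)$, so that $M\le e^{2\tau^2/\eps^2}/(2\Phi(\tau)-1)^n$. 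The decisive step is setting $\tau=\sqrt{2\log(\eps^2 n/4)}$, which is well-defined precisely because $\eps^2 n\ge 4$: one then has $(2\Phi(\tau)-1)^n\ge (1-4/(\eps^2n))^n\ge e^{-8/\eps^2}$ via $(1-x)^n\ge e^{-2nx}$ for $x\le 1/2$, while $2\tau^2/\eps^2=4\log(\eps^2 n/4)/\eps^2$. Summing exponents yields $\log M\le 4\log(\eps^2 n/4)/\eps^2 + 8/\eps^2 = O(\log(2+\eps^2 n)/\eps^2)$.

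For the lower bound I would produce explicit $a_i$'s realizing the matching complexity, treating the two regimes separately. When $\eps^2 n$ is bounded, take the $a_i$'s to be (with repetitions) the first $k$ standard basis vectors in $R^k$ with $k=\log(2+\eps^2 n)/\eps^2$, and count $\eps$-lattice cells intersecting $B^k$; the volume estimate $\mathrm{vol}(B^k)/\eps^k\asymp(2\pi e/(k\eps^2))^{k/2}$ gives $2^{\Omega(\log(2+\eps^2 n)/\eps^2)}$. When $\eps^2 n$ is large, set $t=\lfloor c\log(2+\eps^2 n)/\eps^2\rfloor$ for a small absolute constant $c$, embed $R^t\subseteq R^k$ (using the assumption $k\ge t/8$), and take the $a_i$'s to be i.i.d. uniform random unit vectors in $R^t$. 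A maximal $(2\eps)$-packing $N$ of $B^t$ has cardinality $\ge 2^{\Omega(t)}$, and for each pair $y\neq y'\in N$ the distribution of $\langle a_i,y-y'\rangle$ is $|y-y'|\cos\theta_i$ for a uniformly random angle $\theta_i$; a Gaussian tail estimate combined with a union bound over the $|N|^2$ pairs shows that, for $c$ sufficiently small, with positive probability every pair of elements of $N$ produces signatures that are $\eps$-separated in some coordinate, giving $2^{\Omega(t)}$ distinct quantized vectors.

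The main technical obstacle is the parameter calibration of $\tau$. A naive union bound on $\max_i|\langle a_i,G\rangle|$ would force $\tau\sim\sqrt{2\log n}$ and produce only the cruder bound $O(\log n/\eps^2)$, which is off by up to $\log n$ in the regime $\eps^2 n\ll n$. Using Khatri--Sidak to replace the joint small-ball probability by a product is what allows the self-consistent choice $\tau\sim\sqrt{\log(\eps^2 n)}$ that balances the shift exponent $e^{-2\tau^2/\eps^2}$ against the product $(1-e^{-\tau^2/2})^n\asymp e^{-4/\eps^2}$, yielding the refined factor $\log(2+\eps^2 n)$ instead of $\log n$.
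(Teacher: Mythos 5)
Your upper bound matches the paper's approach: both rest on the Khatri--Sidak inequality applied to the slab body $T$, the shift bound $\gamma(x+A)\ge e^{-\|x\|^2/2}\gamma(A)$, and a disjoint-events packing argument. The paper scales the standard Gaussian by $\sqrt{t}$ with $t$ defined implicitly by $\eps=\sqrt{2\log(2+n/t)/t}$, while you parametrize by the variance $\sigma^2$ and set $\tau=\eps/(2\sigma)$; these are equivalent, and your calibration of $\tau$ reproduces the paper's implicit definition of $t$. One small caveat: the assertion that the number of distinct quantized vectors is at most $M$ (your maximal $\eps$-separated packing) is not literally correct, since two points on opposite sides of a lattice hyperplane can have different floor vectors while being arbitrarily close; but this does not affect the bit-complexity upper bound, which follows from the maximality of $M$ exactly as you say.

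The lower bound in your second regime, however, has a genuine gap. You take $N$ to be a maximal $(2\eps)$-packing of $B^t$ and want random unit vectors $a_i\in S^{t-1}$ to $\eps$-separate every pair. For the closest pairs, at distance $|y-y'|\approx 2\eps$, the separating event $|\langle a_i,y-y'\rangle|>\eps$ requires $|\cos\theta_i|>\eps/|y-y'|\approx 1/2$. For a uniformly random direction in $\RR^t$ this has probability only about $e^{-c_0 t}$ for an absolute constant $c_0$ --- exponentially small in the dimension $t$ and \emph{independent of} $\eps$. The union bound then needs $n\,e^{-c_0 t}\gtrsim \log |N|^2$, i.e.\ roughly $c_0 t \lesssim \log(n/t)$; with $t=c\log(2+\eps^2 n)/\eps^2$ this forces $c/\eps^2\lesssim 1$, which fails precisely when $\eps$ is small, the regime the theorem is supposed to cover. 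Making $c$ small does not help, since $c_0$ does not depend on $c$.

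The fix --- and it is what the paper does --- is to replace the $(2\eps)$-packing by a set $B$ of roughly $e^{k/8}$ \emph{unit} vectors at pairwise distance at least a universal constant (the paper uses $1/2$), with $k=\log(2+\eps^2 n)/(8\eps^2)$. Separating a pair then only requires $|\cos\theta_i|$ to exceed something of order $\eps$ rather than $1/2$, and this has probability about $e^{-C\eps^2 k}=(2+\eps^2 n)^{-\Theta(1)}$, which is polynomial in $1/(\eps^2 n)$ and hence large enough that $n$ independent samples separate every pair with room to spare in the union bound. With this single construction the split into regimes disappears; the paper's unified argument covers all $\eps\ge 2/\sqrt{n}$.
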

\vspace{0.1cm}

\noindent
{\bf Proof:}\, As the number of bits required is clearly a monotone
increasing function of the dimension it suffices to prove the upper
bound for $k=n$ and the lower bound for
$k=\frac{\log (2+\eps^2 n)}{8\eps^2}$.

We start with the upper bound. Define $t>0$ by the equation
$$
\eps=\frac{\sqrt {2 \log (2+n/t)}}{\sqrt t}.
$$
(There is a unique solution as the right hand side is a decreasing
function of $t$).
Therefore
$$
t=\frac{2 \log (2+n/t)}{\eps^2}.
$$
Since $\eps \geq \frac{2}{\sqrt n}$ this implies that
$t <n$ since otherwise the right hand side is at most
$2 \log 3 \cdot n/4 <n$.
By the last expression for $t$, $t \geq \frac{1}{\eps^2}$ and thus
$ \log (2+n/t) \leq \log (2+\eps^2 n)$ implying that
$$
t \leq \frac{2 \log (2+\eps^2 n)}{\eps^2}.
$$
This implies that
$$\frac{n}{t} \geq \frac{\eps^2 n}{2 \log (2+\eps^2 n)} $$
and since $\eps^2 n \geq 4$ it follows that
$$
\log (2+n/t) \geq \frac{1}{4} \log (2 +\eps^2 n),
$$
as can be shown by checking that for $z \geq 4$,
$$
2+\frac{z}{2 \log (2+z)} \geq (2+z)^{1/4}.
$$
We have thus shown that
$$
\frac{\log (2 +\eps^2 n)}{2\eps^2} \leq t \leq
\frac{2 \log (2 +\eps^2 n)}{\eps^2}.
$$

Define a convex set $K$ in $R^n$ as follows.
$$
K=\{ x \in R^n: ~|\langle \frac{x}{\sqrt t}, a_i \rangle | \leq
\eps ~~\mbox{for all}~~1 \leq i \leq n \}.
$$
By the Khatri-Sidak Lemma (\cite{Kh}, \cite{Si}, see also \cite{G}
for a simple proof), if $\gamma_n $
denotes the standard Gaussian measure in $R^n$, then
$$
\gamma_n(K) \geq \prod_{i=1}^n
\gamma_n(\{ x \in R^n: ~|\langle \frac{x}{\sqrt t}, a_i \rangle |
\leq \eps\} ) \geq (1-2e^{-\eps^2 t/2})^n
$$
$$
\geq(1-2e^{-\log(2+n/t)})^n =(1-\frac{2t}{2t+n})^n \geq e^{-3t}.
$$
For every measurable centrally symmetric set $A$ in $R^n$ and for
any vector $x \in R^n$,
$$
\gamma_n(x+A) \geq e^{-\|x\|^2/2} \gamma_n(A).
$$
For completeness we repeat the standard argument.
$$
\gamma_n(x+A) =\int_A e^{-\|x+y\|^2/2} \frac{1}{(2\pi)^{n/2}} dy
=e^{-\|x\|^2/2}\gamma_n(A) \int_A e^{-\langle x,y\rangle}
e^{-\|y\|^2/2} \frac{1}{\gamma_n(A) (2\pi)^{n/2}} dy.
$$
The integral in the right hand side is the expectation, with
respect to the Gaussian measure on $A$, of
$e^{-\langle x,y\rangle}$. By Jensen's Inequality this is at least
$e^z$ where $z$ is the expectation  of
$-\langle x,y\rangle$ over $A$. As $A=-A$ this last expectation
is $0$ and as $e^0=1$ we conclude that
$\gamma_n(x+A) \geq e^{-\|x\|^2/2}\gamma_n(A)$, as needed.
Taking $A$ as the set $K$ defined above and letting $x$ be any
vector $b$ of norm at most $1$ in $R^n$ we get
$$
\gamma_n(\sqrt t b +K) \geq e^{-t/2}\gamma_n(K) > e^{-4t}.
$$

Given a vector $b \in R^n$, $\|b\| \leq 1$, let $X$ be a standard random
Gaussian in $R^n$. We bound from below the probability of the event
$E_b$
that for every $i$, $1 \leq i \leq n$,
$$
| \langle \frac{X}{\sqrt t},a_i \rangle -\langle b,a_i \rangle |
\leq \eps.
$$
This, however, is exactly the probability that
$X-b \sqrt t \in K$, that is, $\gamma_n(\sqrt t b+K)$
which as we have
seen is at least $e^{-4t}$.

We can now complete the proof of the upper bound as done in Section
2. Let $B$ be a maximum collection of vectors of norm at most $1$ in $R^n$
so that for every two distinct $b,b' \in B$ there is some $i$ so
that $|\langle b,a_i \rangle -\langle b',a_i \rangle| >2 \eps$.
Then the events $E_b$ for $b \in B$ are pairwise disjoint  and
hence the sum of their probabilities is at most $1$. It follows
that $|B| \leq e^{4t}$. The upper bound follows as the number of
bits needed to represent all inner products
$\langle b,a_i \rangle$ for $1 \leq i \leq n$ up to an additive
error of $2 \eps$ is at most $\lceil \log_2 |B| \rceil$.

We proceed with the proof of the lower bound, following the
reasoning in Section 4. Put
$$
k= \frac{\log (2+\eps^2 n)}{8\eps^2}.
$$
Let $B$ be a collection of, say, $e^{k/8}$ unit vectors in $R^k$ so that
the Euclidean distance between any two of them is at least $1/2$.
We claim that there are $n$ unit vectors $a_i$ in $R^k$ so that for
any two distinct members $b,b'$ of $B$ there is an $i$ so that
$|\langle b,a_i \rangle -\langle b',a_i \rangle| >\eps$.

Indeed, taking the vectors $a_i$ randomly, independently
and uniformly in the unit ball of $R^k$ the probability that for a
fixed pair $b,b'$ the above fails is at most
$$
(1-e^{-4\eps^2 k})^n.
$$
Our choice of parameters ensures that
$$
{|B| \choose 2} (1-e^{-4\eps^2 k})^n <1.
$$
Indeed it suffices to check that
$$
e^{-4\eps^2 k} \cdot n > k/4
$$
that is
$ 4\eps^2 k<\log (4n/k)$ or
$$
k < \frac{\log
(4n/k)}{4\eps^2}.
$$
It thus   suffices to check that
$$
\log (2+\eps^2 n) < 2 \log (4n/k)=2 \log (\frac{32\eps^2
n}{\log(2+\eps^2n)}).
$$
This easily holds since for $\eps \geq 2/\sqrt n$,
$$
2 \log (\frac{32\eps^2 n}{\log(2+\eps^2n)})
> \log (2+\eps^2 n).
$$

By the union bound the assertion of the claim follows, implying the
desired lower bound  as  no two members of $B$ can have the same
representation. This completes the proof of the theorem. \hfill
$\Box$

\subsection{Compression schemes}

In this subsection we prove Theorem \ref{t23}. The basic approach
is similar to the one in the proof of Theorem \ref{t11}, the main
difference is that in the upper bound proved in Lemma \ref{l21}
we replace the simple union bound by a more sophisticated geometric
argument based on Harg\'e's Inequality, which is a special case of the
Gaussian correlation conjecture, proved recently by Royen.
We start with the following Lemma.
\begin{lemma}
\label{l62}
Let $H_1,\ldots,H_n \subseteq \RR^k$ be symmetric slabs, where
a symmetric slab is a set of the form $\{ x \in \RR^k \, ; \, |\langle x, \theta \rangle| \leq 1 \}$ for some $\theta \in \RR^k$.
Then,
$$ \frac{Vol_k \left( B^k \cap \bigcap_{i=1}^n H_i \right)}{Vol_k(B^k)}
\geq c^k \prod_{i=1}^n \gamma_k(\sqrt{k} H_i ), $$
where $c>0$ is an absolute constant.
\end{lemma}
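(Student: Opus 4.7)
The plan is to reduce the inequality to two separate ingredients: a Gaussian correlation step for the slabs, and a generic volume-vs-Gaussian comparison for symmetric convex bodies, and then combine them. Write $K = \bigcap_{i=1}^n H_i$, a symmetric convex body. By the Khatri-Sidak Lemma (already invoked in this section), applied to the symmetric slabs $\sqrt{k} H_i$,
\[
\gamma_k(\sqrt{k} K) \;=\; \gamma_k\Big( \bigcap_{i=1}^n \sqrt{k} H_i \Big) \;\geq\; \prod_{i=1}^n \gamma_k(\sqrt{k} H_i).
\]
So it suffices to establish the geometric bound
\[
\frac{\mathrm{Vol}_k(B^k \cap K)}{\mathrm{Vol}_k(B^k)} \;\geq\; c^k \, \gamma_k(\sqrt{k} K)
\]
for any symmetric convex body $K \subseteq \RR^k$, with an absolute constant $c>0$.

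To prove this geometric bound, I would use polar coordinates and the radial function $\rho_K(\theta) = \sup\{ t \geq 0 : t\theta \in K\}$. Both sides can be written as averages over $\theta$ uniform on $S^{k-1}$:
\[
\frac{\mathrm{Vol}_k(B^k \cap K)}{\mathrm{Vol}_k(B^k)} = \mathbb{E}_\theta \bigl[ \min(\rho_K(\theta), 1)^k \bigr],
\qquad
\gamma_k(\sqrt{k} K) = \mathbb{E}_\theta \bigl[\mathrm{Pr}(|Z| \leq \sqrt{k}\,\rho_K(\theta))\bigr],
\]
where $Z$ is a standard Gaussian in $\RR^k$. Hence it suffices to verify the pointwise inequality
\[
\mathrm{Pr}(|Z| \leq \sqrt{k}\, s) \;\leq\; C^k \min(s,1)^k \qquad \text{for all } s > 0,
\]
with $C$ absolute; one then takes $c = 1/C$. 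For $s \geq 1$ the left side is at most $1$, so the bound is trivial. For $s \leq 1$, since $|Z|^2 \sim \chi^2_k$, a standard Chernoff calculation gives $\mathrm{Pr}(|Z|^2 \leq ks^2) \leq \inf_{\lambda>0} e^{\lambda k s^2}(1 + 2\lambda)^{-k/2}$; optimizing at $\lambda = (1-s^2)/(2s^2)$ yields the clean estimate $\mathrm{Pr}(|Z| \leq \sqrt{k}\,s) \leq (s\sqrt{e})^k$. Thus $C = \sqrt{e}$ works and we obtain the volume bound with $c = e^{-1/2}$.

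Assembling the two steps:
\[
\frac{\mathrm{Vol}_k(B^k \cap K)}{\mathrm{Vol}_k(B^k)} \;\geq\; c^k \gamma_k(\sqrt{k} K) \;\geq\; c^k \prod_{i=1}^n \gamma_k(\sqrt{k} H_i),
\]
which is the lemma. I do not expect a real obstacle here: Khatri-Sidak handles the interaction between the slabs, and the $\chi^2_k$ lower-tail computation is elementary. The only delicate issue to check is that the Khatri-Sidak inequality is applied in the correct form (to slabs in $\RR^k$ under $\gamma_k$), and that the pointwise Chernoff estimate has the right exponent so that the constant $c$ does not depend on $k$; both are routine.
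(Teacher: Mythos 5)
Your proof is correct, and it takes a genuinely different route from the paper's. The paper applies Harg\'e's inequality (a special case of the Gaussian correlation conjecture, proved by Royen) to the symmetric convex bodies $\sqrt{k}B^k$ and $\sqrt{k}\bigcap H_i$, obtaining
$\gamma_k(\sqrt{k}(B^k\cap\bigcap H_i))\geq \gamma_k(\sqrt{k}B^k)\cdot\gamma_k(\sqrt{k}\bigcap H_i)$, and then passes from Gaussian measure to Lebesgue volume using that the Gaussian density is bounded by $(2\pi)^{-k/2}$ and that $\mathrm{Vol}_k(\sqrt{k}B^k)\leq C^k$. You instead sidestep Harg\'e entirely: after the shared Khatri--Sidak step you prove a clean general inequality
$\mathrm{Vol}_k(B^k\cap K)/\mathrm{Vol}_k(B^k)\geq c^k\gamma_k(\sqrt{k}K)$
valid for any (star-shaped, in particular symmetric convex) $K$, by integrating in polar coordinates and reducing to the pointwise $\chi^2_k$ lower-tail bound $\mathrm{Pr}(|Z|\leq\sqrt{k}s)\leq(\sqrt{e}s)^k$ for $s\leq1$, which your Chernoff computation verifies correctly. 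The trade-off: the paper's argument is shorter to state but leans on a substantially deeper theorem (Royen's Gaussian correlation inequality), whereas yours uses only Khatri--Sidak plus elementary calculus, and in fact yields a more flexible intermediate statement (the volume-vs-Gaussian comparison holds for arbitrary star bodies, not just the ones arising here). Both arrive at the same conclusion with an explicit absolute constant.
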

\begin{proof}
Since $T = \sqrt{k} \bigcap_{i=1}^n H_i $ is convex and
centrally-symmetric, we may  use Harg\'e's inequality \cite{Ha}, which
is a particular case of the Gaussian correlation
inequality proven by Royen \cite{Ro}. This implies that
$$ \gamma_k \left( \sqrt{k} \left( B^k \cap \bigcap_{i=1}^n
H_i \right) \right) \geq \gamma_k(\sqrt{k} B^k) \cdot
\gamma_k \left( \sqrt{k} \bigcap_{i=1}^n H_i \right)
\geq c \prod_{i=1}^n \gamma_k(\sqrt{k} H_i ) $$
where the last passage is the Khatri-Sidak lemma. However,
$$ \frac{Vol_k \left( B^k \cap \bigcap_{i=1}^n H_i \right)}
{Vol_k(B^k)} = \frac{Vol_k \left( \sqrt{k} \left( B^k
\cap \bigcap_{i=1}^n H_i \right) \right)}{Vol_k(\sqrt{k} B^k)}
\geq  \frac{\gamma_k \left( \sqrt{k} \left( B^k \cap
\bigcap_{i=1}^n H_i \right)\right) (2 \pi)^{k/2}}
{Vol_k(\sqrt{k} B^k)}
$$
since the density of $\gamma_k$ is at
most $(2\pi)^{-k/2}$.
Since $Vol_k(\sqrt{k} B^k) \leq C^k$, the lemma is proven.
\end{proof}

We proceed with the proof of the upper bound in Theorem \ref{t23}.
For $t \leq k \leq n$ the upper bound (which is probably not tight)
is proved by repeating the proof of Lemma \ref{l21} as it is.
For $1 \leq k \leq \log(2+\eps^2 n)$ the upper bound follows
by rounding each vector to the closest point in an $\eps$-net in
the ball $B^k$. It remains to deal with the interesting range
$\log(2+\eps^2 n) \leq k \leq t$. By the computation in the
beginning of the proof
of Theorem \ref{t42},
$$
\eps=\Theta (\frac{\sqrt { 2 \log (2+n/t)}}{\sqrt t}).
$$
Suppose $k=\delta t$, with $\eps^2 \leq \delta \leq b$ for some
small absolute positive constant $b$. Given  points
$w_1,\ldots,w_n$ in $B^k$, as in
the proof of Lemma \ref{l21} it suffices
to prepare a
sketch for the inner products between pairs of distinct points.
Again, as  in that proof, let
$\GG$ be a maximal
(with respect to containment) set of Gram matrices  of ordered
sequences of $n$ vectors $w_1, \ldots ,w_n$ in $B^k$,
so that every
two distinct members of $\GG$ are $\eps$-separated (that is, have
at least one non-diagonal entry in which the two matrices differ by
more than $\eps$). By
the maximality of $\GG$, for every Gram matrix $M$
of $n$ vectors in $B^k$
there is a member of $\GG$ in which
all inner products of pairs of distinct points
are within $\eps$ of the corresponding inner products
in $M$, meaning that as a sketch for $M$ it suffices to
store (besides the approximate norms of the vectors),
the index of an appropriate member of $\GG$. This requires
$\log |\GG|$ bits. It remains to prove an upper bound for
the cardinality of $\GG$.

Let $V_1,V_2, \ldots ,V_n$ be $n$ vectors, each chosen
randomly, independently and uniformly in the ball of
radius $2$ in $R^k$. Let $T=G(V_1,V_2, \ldots ,V_n)$
be the Gram matrix of the vectors $V_i$.
For each $G \in \GG$ let $A_G$ denote the
event that for every $1 \leq i \neq j \leq n$,
$|T(i,j)-G(i,j)| < \eps/2$. Note that since the members of
$\GG$ are $\eps$-separated, all the events $A_G$ for $G \in \GG$
are pairwise disjoint. To complete the proof it thus suffices to
show that the probability of each event
$A_G$ is at least $e^{-O(nk \log (1/\delta))}$. To see that
this is the case,
fix a Gram matrix $G = G(w_1,\ldots,w_n) \in \GG$ for
some $w_1,\ldots,w_n \in B^k$
of norm at most $2$. For each fixed
$i$ the probability that
$V_i$ lies in the ball of radius $\delta$
centered at $w_i$ is exactly
$(\delta/2)^k$. Therefore the probability that this happens for all $i$
is $(\delta/2)^{nk}$.
Conditioning on that, for each $i$ the
vector $V_i-w_i$ is uniformly  distributed
in the ball of radius $\delta$ in $R^k$
centered at $0$. For each $i$ let, now,
$A_i$ be the event that $|\langle V_i-w_i, w_j \rangle| \leq
\eps/4$ for all $i < j \leq n$, and that $|\langle V_{\ell},
V_i-w_i \rangle | \leq \eps/4$ for all $1 \leq \ell <i$.
In particular, the event $A_1$ is that $V_1-w_1$ lies in the intersection
of the $n-1$ slabs $|\langle x,w_j \rangle| \leq \eps/4$ for $j>1$. More
generally, conditioning on the events
$A_1,\ldots A_{i-1}$ (as well as on the events that
$|V_j-w_j| \leq \delta$ for all $j$), the event $A_{i}$ is that
$V_{i}-w_{i}$ lies in the intersection of the slabs
$|\langle x,w_j \rangle| \leq \eps/4$ for $j > i$ and the slabs
$|\langle V_{\ell},x \rangle | \leq \eps/4$ for $\ell <i$. Note
that conditioning on $A_1,\ldots ,A_{i-1}$, the
vectors $V_1,V_2, \ldots ,V_{i-1}$ are of norm at most
$1+\eps/4<2$, and once their values are exposed then indeed
we have here an intersection of $n-1$ slabs with a ball centered at
the origin.

By Lemma \ref{l62} it follows that the conditional probability of
each event $A_i$ given all previous ones $A_1,\ldots ,A_{i-1}$ and
given that all vectors $V_i$ lie within distance $\delta$ of
the corresponding vectors $w_i$ is at least
$$
C^{-k} (1-2 e^{-\frac{\eps^2}{64\delta^2} k})^n,
$$
where $C$ is an absolute positive constant.
Since $k=\delta t$ and $\eps^2 t=\Theta(\log (2+n/t))$
it follows that
$$
e^{-\frac{\eps^2}{64\delta^2} k} \leq
e^{-\frac{c\log(2+n/t)}{\delta}}=
(\frac{t}{2t+n} )^{c/\delta}.
$$
As $t \leq n$ the last quantity is at most
$$
(\frac{1}{3})^{\frac{c}{2\delta}} \frac{t}{2t+n}
$$
provided $\delta <c/2$.

Thus
$$
(1-2 e^{-\frac{\eps^2}{64\delta^2} k})^n
\geq
[1-(\frac{1}{3})^{c/2\delta} \frac{t}{2t+n}]^n \geq
e^{-(1/3)^{c/2\delta} t} \geq e^{-\delta t} =e^{-k}
$$
for all $\delta <c'$.

By multiplying all conditional probabilities we conclude that
the probability that $V_i-w_i$ is of norm at most $\delta$ for all
$i$ and that all events $A_i$ hold too is at least
$e^{-O(n k \log (1/\delta))}$. However, in this case, for all $i <j$
$$
|\langle V_i,V_j \rangle -\langle w_i,w_j \rangle |
\leq
|\langle V_i-w_i,w_j \rangle|+
|\langle V_i,V_j-w_j \rangle |
\leq \eps/2
$$
and the event $A_G$ occurs. Thus the probability of
each event $A_G$ is at least
$e^{-O(n k \log (1/\delta))}$, providing the required upper bound for
$|\GG|$ and hence completing the proof of the upper bound in
Theorem \ref{t23}.

The proof of the lower bound is similar to the proof of the lower
bound in Theorem \ref{t42}. The most interesting case here
is again the range
$$
\log (2+\eps^2 n) \leq k \leq t=\frac{\log (2+\eps^2
n)}{\eps^2}.
$$
(Note that the lower bound for $k \geq t$ follows from the case
$k=\Theta(t)$.)
Here it is convenient to define
$\delta$ so that $k=\delta^2 \frac{\log (2+\eps^2 n)}{4\eps^2}$
where $\eps \geq \frac{2}{\sqrt n}$ and $2\eps \leq \delta <1$
and to assume we have $2n$ points.
Let $B$ be a collection of, say,
$(\delta^{-1} / 2)^k$ unit vectors in $R^k$ so that
the Euclidean distance between any two of them is at least $\delta$.
We claim that there are $n$ unit vectors $a_i$ in $R^k$ so that for
any two distinct members $b,b'$ of $B$ there is an $i$ so that
$|\langle b,a_i \rangle -\langle b',a_i \rangle| >\eps$.

Indeed, taking the vectors $a_i$ randomly, independently
and uniformly in the unit ball of $R^k$ the probability that for a
fixed pair $b,b'$ the above fails is at most
$$
(1-e^{-\frac{\eps^2}{\delta^2} k})^n.
$$
Our choice of parameters ensures that
$$
{|B| \choose 2} (1-e^{-\frac{\eps^2}{\delta^2} k})^n <1.
$$
Indeed it suffices to check that
$$
e^{-\frac{\eps^2}{\delta^2}k} \cdot n > 2 k \log (1/2\delta)
$$
that is
$$
\frac{\eps^2}{\delta^2}k < \log (\frac{n}{2k \log (1/2\delta)}),
$$
or equivalently
$$
k < \frac{\delta^2}{\eps^2} \log (\frac{n}{2k \log (1/2\delta)}).
$$
By the definition of
$$
k=\delta^2 \frac{\log (2+\eps^2 n)}{4 \eps^2}
$$
it suffices to show that
$$
\log (2+\eps^2 n) < 4 \log (\frac{n}{2k \log (1/2\delta)})
=4 \log [\frac{n 4 \eps^2}{2\delta^2 \log (2+\eps^2 n) \log
(1/2\delta)}].
$$
This easily holds for $\eps \geq 2/\sqrt n$.

By the union bound the assertion of the claim follows.
The desired result now holds, since every union of the vectors
$a_i$ with an
ordered set of $n$ members of $B$ must have a different
representation, hence the number of bits needed is at least
$n \log_2 |B|=\Omega(nk \log (1/\delta)$.

The case $k \leq \log (2+\eps^2 n)$ is proved in a similar way
by letting $B$ be a $2 \eps$-separated set of points in $B^k$.
We omit the detailed computation.
This completes the proof of the theorem. \hfill $\Box$

\subsection{Halving the dimension}

In this subsection we prove Theorem \ref{thm_944}.
The theorem
is equivalent to the following statement:

\begin{theo}
Let $m \geq n \geq 1, \eps > 0$ and assume that
$a_1,\ldots,a_m, b_1,\ldots,b_m \in \RR^{2n}$
are points of norm at most one.
Suppose that $X_1,\ldots,X_m, Y_1,\ldots,Y_m \in \RR^n$
are independent random vectors, distributed according to
standard Gaussian law.

\medskip Assume that $n \geq C_1 \cdot \eps^{-2} \log(2 + \eps^2 m)$.
Then with probability of at least $\exp(-C_2 n m)$,
\begin{equation}
 \left| \left \langle \frac{X_i}{\sqrt{n}}, \frac{Y_j}{\sqrt{n}}
\right \rangle - \left \langle a_i, b_j \right \rangle \right |
\leq \eps   \qquad \qquad \text{for} \ i,j=1,\ldots,m, \label{eq_1126}
\end{equation}
and moreover $|X_i| + |Y_i| \leq C_3 \sqrt{n}$ for all $i$.
\label{prop_948_}
\end{theo}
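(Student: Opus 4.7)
The plan is to reduce the problem to a Gaussian probability estimate for a single $i$ by conditioning on the $Y_j$'s and exploiting independence. The key structural observation is that once $Y = (Y_1,\ldots,Y_m)$ is frozen, the events indexed by $i$ become independent (they involve disjoint $X_i$'s) and each involves only linear constraints on the corresponding $X_i$. It therefore suffices to show a conditional lower bound of $\exp(-Cn)$ for a single $i$; multiplying over $m$ values of $i$ then yields $\exp(-C_2 n m)$.

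First I would condition on a typical realization of $Y$. By standard Gaussian concentration (and a harmless union bound, since $n \gtrsim \log(2 + \eps^2 m)$), with probability at least $1 - e^{-cn}$ we have $\sqrt{n}/2 \leq |Y_j| \leq 2\sqrt{n}$ for all $j$, along with well-conditioning of the $n \times m$ matrix $Y$. Fix such a $Y$ and an index $i$; the event of interest is the intersection of $m$ slabs,
\[ A_i = \bigcap_{j=1}^m \big\{ z \in \RR^n : |\langle z, Y_j \rangle - n \langle a_i, b_j \rangle| \leq n\eps \big\}. \]
The strategy is to produce a target $x_i^* \in \RR^n$ with $|x_i^*| \leq c_0 \sqrt{n}$ satisfying $|\langle x_i^*, Y_j\rangle - n\langle a_i, b_j\rangle| \leq n\eps/2$ for every $j$; then $A_i \supseteq x_i^* + K$, where $K = \bigcap_j \{z : |\langle z, Y_j\rangle| \leq n\eps/2\}$ is origin-symmetric and convex. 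By the shift inequality applied in the proof of Theorem \ref{t42}, $\gamma_n(x_i^* + K) \geq e^{-|x_i^*|^2/2} \gamma_n(K)$, and by Khatri--Sidak (or Harge's inequality),
\[ \gamma_n(K) \geq \prod_{j=1}^m \gamma_n(\{z : |\langle z, Y_j\rangle| \leq n\eps/2\}) \geq (1 - 2 e^{-n\eps^2/32})^m. \]
Using $|Y_j| \leq 2\sqrt{n}$ and the hypothesis $n\eps^2 \geq C_1 \log(2 + \eps^2 m)$, the last product is bounded below by an absolute constant. Combined with $|x_i^*| \leq c_0 \sqrt{n}$ this yields $P(A_i \mid Y) \geq \exp(-Cn)$, and independence of the $X_i$'s across $i$ gives $\exp(-Cnm)$.

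The principal obstacle is proving the existence of the target $x_i^*$. After the rescaling $\tilde Y_j = Y_j/\sqrt{n}$ and $\tilde x = x/\sqrt{n}$, this becomes the statement that for every $a \in \RR^{2n}$ with $|a|\leq 1$ there is $\tilde x \in \RR^n$ with $|\tilde x| \leq c_0$ and $|\langle \tilde x, \tilde Y_j\rangle - \langle a, b_j\rangle| \leq \eps/2$ for all $j$. By LP duality this is equivalent to the inequality
\[ |B\lambda|_2 \leq c_0 |\tilde Y \lambda|_2 + (\eps/2)|\lambda|_1 \qquad \text{for all } \lambda \in \RR^m, \]
where $B$ is the $2n \times m$ matrix whose columns are the $b_j$'s. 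The crucial structural fact is that $B$ has rank at most $2n$ (the ambient dimension of the $b_j$'s), so this dual inequality is effectively a low-dimensional geometric statement. I would verify it for typical Gaussian $\tilde Y$ by combining the low-$M^*$ estimate with the finite volume-ratio theorem, as hinted in the Techniques section: the random Gaussian map $\lambda \mapsto \tilde Y\lambda$ essentially preserves the relevant Euclidean structure on the range of $B^T$, while the $\ell_1$ slack term absorbs the residual error. The logarithmic improvement from $\log m$ to $\log(2+\eps^2 m)$ reflects precisely this exploitation of low effective dimension.

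The remaining pieces are routine. The event $\{|X_i| + |Y_i| \leq C_3\sqrt{n} \text{ for all } i\}$ holds with probability $1 - e^{-cm}$ by concentration of Gaussian norms, and can be intersected with the main event without affecting the exponent. Integrating over the typical-$Y$ conditioning loses at most a factor $1 - e^{-cn}$, which is absorbed into the constants.
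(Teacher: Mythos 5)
Your high-level plan correctly identifies the key ingredients (Khatri--Sidak, the Gaussian shift inequality, and a low-$M^*$/finite-volume-ratio input), and the reduction-via-a-target step 2--3 is sound: once a target $x_i^*$ with $|x_i^*| \leq c_0 \sqrt n$ is in hand, the bound $\gamma_n(x_i^* + K) \geq e^{-|x_i^*|^2/2} \gamma_n(K) \geq e^{-Cn}$ follows and the product over $i$ gives $e^{-Cnm}$. (A small imprecision: the Khatri--Sidak product $(1 - 2e^{-n\eps^2/32})^m$ is $\geq e^{-Cn}$ under the hypothesis, not ``an absolute constant,'' but that still suffices.) The genuine gap is exactly where you flag it yourself: the existence of the target $x_i^*$ for a typical Gaussian $Y$, simultaneously for all $a_i$, is asserted but not proved, and this is the heart of the theorem. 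Saying you ``would verify it by combining the low-$M^*$ estimate with the finite volume-ratio theorem'' is not a proof; the paper devotes Theorem \ref{thm_fvr}, Lemma \ref{lem_1006} and Corollary \ref{cor_1024} precisely to this point, and the translation argument inside Lemma \ref{lem_1006} (to handle shifts not lying in the subspace) is not routine.

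Beyond the missing step, there is a structural mismatch that makes the gap harder to fill than it may appear. The paper does \emph{not} condition on $Y$; it applies the dual finite-volume-ratio theorem to the \emph{deterministic} body $K = \{x \in \RR^{2n} : |\langle x/\sqrt n, b_j\rangle| \leq \eps\ \forall j\}$, obtaining a deterministic $n$-dimensional subspace $E \subseteq \RR^{2n}$, and then chooses $U$ with $U(\RR^n) = E$. The $X_i$'s are then handled via $\gamma_E(\sqrt n\, a_i + CK) \geq c^n$ and the $Y_j$'s separately via Khatri--Sidak and the shift inequality (no further convex-geometric input needed for the $Y$ side). Your plan instead conditions on a typical realization of the random Gaussian $n \times m$ matrix $\tilde Y$ and must then prove that the \emph{random} map $\lambda \mapsto \tilde Y^T \lambda$ lets one approximately realize every $B^T a$ for $|a| \leq 1$ with a bounded preimage. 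This is a statement about a random Gaussian matrix, not about a deterministic convex body in $\RR^{2n}$, so neither Theorem \ref{thm_fvr} nor Theorem \ref{thm_1405} applies off the shelf; you would have to set up a new (and not obviously easier) variant. Your LP duality reformulation $|B\lambda|_2 \leq c_0|\tilde Y\lambda|_2 + (\eps/2)|\lambda|_1$ is correct, but note it forces $|B\lambda|_2 \leq (\eps/2)|\lambda|_1$ on $\ker \tilde Y$, which is a nontrivial restriction-to-a-random-subspace property of the $\ell_1 \to \ell_2$ operator norm of $B$ and would need a proof of its own. I'd recommend aligning with the paper's decomposition: work in $\RR^{2n}$, fix the deterministic $E$ via Corollary \ref{cor_1024}, and avoid conditioning on the Gaussian $Y$ first.
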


In the proof of Theorem \ref{prop_948_} we will use the following
theorem, which  is the dual version of the finite-volume ratio theorem
of Szarek and Tomczak-Jaegermann (see e.g. \cite[Section 5.5]{AGM}
and also \cite{K} for an alternative proof).
A convex body is a compact, convex set with a
non-empty interior, and as before
$B^n = \{ x \in \RR^n \, ; \, |x| \leq 1 \}$
is the centered unit Euclidean ball in $R^n$.

\begin{theo} Let $K \subseteq B^{2n}$ be a centrally-symmetric
convex body with $Vol_{2n}(K) \geq e^{-10 n} Vol_{2n}(B^{2n})$.
Then there exists an $n$-dimensional subspace $E \subseteq \RR^{2n}$
with
$$ c B^{2n} \cap E \subseteq Proj_E(K),
$$
where $Proj_E$ is the orthogonal projection
operator onto $E$ in $\RR^{2n}$.
\label{thm_fvr}
\end{theo}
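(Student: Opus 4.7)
The plan is to deduce Theorem \ref{thm_fvr} from the classical (primal) finite volume ratio theorem of Szarek and Tomczak-Jaegermann by polar duality. Set $L = K^\circ \subseteq \RR^{2n}$. Since $K \subseteq B^{2n}$, polar duality gives $B^{2n} \subseteq L$, placing $L$ in exactly the setup required by the classical theorem. The hypothesis $Vol_{2n}(K) \geq e^{-10 n} Vol_{2n}(B^{2n})$ must now be converted into an upper bound on $Vol_{2n}(L)$. This is provided by the Blaschke-Santal\'o inequality for centrally symmetric convex bodies,
$$ Vol_{2n}(K) \cdot Vol_{2n}(K^\circ) \leq Vol_{2n}(B^{2n})^2, $$
which combined with the hypothesis yields $Vol_{2n}(L) \leq e^{10 n} Vol_{2n}(B^{2n})$. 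Hence the volume ratio $vr(L) = (Vol_{2n}(L)/Vol_{2n}(B^{2n}))^{1/(2n)}$ is bounded by the absolute constant $e^{5}$.

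The next step is to apply the Szarek-Tomczak-Jaegermann theorem (see e.g.\ \cite[Section 5.5]{AGM}) to $L$ in $\RR^{2n}$ with target subspace dimension equal to half the ambient dimension. This yields an $n$-dimensional subspace $E \subseteq \RR^{2n}$ and an absolute constant $C > 0$ (depending only on the bound $e^{5}$ on the volume ratio) such that
$$ L \cap E \subseteq C \, (B^{2n} \cap E). $$

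Finally I would dualize inside $E$ using the standard projection-section identity
$$ (Proj_E K)^{\circ_E} = K^\circ \cap E = L \cap E, $$
where $\circ_E$ denotes the polar taken inside $E$; this is a direct consequence of the fact that for $y \in E$ and $x \in \RR^{2n}$ one has $\langle y, Proj_E x \rangle = \langle y, x \rangle$. The previous display then reads $(Proj_E K)^{\circ_E} \subseteq C (B^{2n} \cap E)$, and since $B^{2n} \cap E$ is its own polar inside $E$, taking polars inside $E$ (which reverses inclusion) gives the desired conclusion $Proj_E K \supseteq (1/C)(B^{2n} \cap E)$, with $c = 1/C$. There is no essential obstacle in this plan: both Santal\'o's inequality and the Szarek-Tomczak-Jaegermann theorem are standard and already cited in the paper, so the proof reduces to the clean polar duality reduction outlined above.
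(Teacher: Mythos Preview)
Your proposal is correct and follows essentially the same route as the paper's own proof: pass to the polar body, use Santal\'o to bound its volume, apply the primal finite-volume-ratio theorem to get $K^\circ \cap E \subseteq C(B^{2n}\cap E)$, and dualize via $(Proj_E K)^{\circ_E} = K^\circ \cap E$. The only cosmetic difference is that you spell out the volume-ratio constant $e^5$ and the projection--section polarity identity a bit more explicitly than the paper does.
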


For completeness we include a short derivation of this theorem
from \cite[Theorem 5.5.3]{AGM}.
\begin{proof}

The polar body to a centrally-symmetric convex
body $K \subseteq R^{2n}$ is $$ K^{\circ}
= \{ x \in \RR^{2n} \, ; \, \forall y \in K, \,
|\langle x, y \rangle| \leq 1  \}. $$
Polarity is an order-reversing involution, i.e., $(K^{\circ} )^{\circ}
= K$ while $K_1 \subseteq K_2$ implies that
$K_1^{\circ} \supseteq K_2^{\circ}$.
Moreover, $(B^{2n})^{\circ} = B^{2n}$.
Since $K \subseteq B^{2n}$
we know that $B^{2n} \subseteq K^{\circ}$.
By the Santal\'o inequality (e.g., \cite[Theorem 1.5.10]{AGM}),
$$  Vol_n(K^{\circ}) \leq \frac{Vol_{2n}(B^{2n})^2}
{Vol_n(K)} \leq e^{10 n} Vol_{2n}(B^{2n}). $$
According to the finite-volume ratio theorem (see,
e.g., \cite[Theorem 5.5.3]{AGM}),
there exists an $n$-dimensional subspace $E \subseteq \RR^{2n}$ with
\begin{equation} K^{\circ} \cap E \subseteq C (B^{2n} \cap E).
\label{eq_1221} \end{equation}
However, $Proj_E(K)^{\circ} = K^{\circ} \cap E$
for any subspace $E \subseteq \RR^{2n}$.
Thus the desired conclusion  follows from (\ref{eq_1221}).
\end{proof}

Theorem \ref{thm_fvr} implies the following:

\begin{lemma}
Let $K$ be as in Theorem \ref{thm_fvr}. Then there exists
an $n$-dimensional subspace $E \subseteq \RR^{2n}$ so that
$$  \forall x \in c_1 B^{2n}, \quad Vol_n( E \cap (x + K) )
\geq c_2^n \cdot Vol_n(B^n).
$$
\label{lem_1006}
\end{lemma}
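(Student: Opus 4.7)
The plan is as follows. Apply Theorem \ref{thm_fvr} to obtain an $n$-dimensional subspace $F \subseteq \RR^{2n}$ with $c B^{2n} \cap F \subseteq Proj_F(K)$, and set $E := F^{\perp}$, so that $Proj_{E^{\perp}}(K) = Proj_F(K) \supseteq c B^{2n} \cap E^{\perp}$. Decomposing $x \in \RR^{2n}$ as $x = x_E + x_{E^{\perp}}$ orthogonally, one checks directly that
$$ E \cap (x + K) = x_E + K_z, \qquad K_z := \{ y \in E : y + z \in K \}, \quad z := -x_{E^{\perp}}, $$
so that $Vol_n(E \cap (x+K)) = Vol_n(K_z)$. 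The task therefore reduces to bounding $Vol_n(K_z)$ from below for $z \in E^{\perp}$ with $|z| \leq c_1$, once $c_1$ is chosen appropriately.

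The key geometric input is Brunn's concavity principle: the function $\phi(z) := Vol_n(K_z)^{1/n}$ is concave on $Proj_{E^{\perp}}(K)$. Since $K$ is centrally symmetric, $K_{-z} = -K_z$, so $\phi$ is even and hence attains its maximum at $z = 0$. Choose $c_1 := c/2$. For $z \in E^{\perp}$ with $0 < |z| \leq c/2$, the point $c \cdot z/|z|$ lies in $c B^{2n} \cap E^{\perp} \subseteq Proj_{E^{\perp}}(K)$, so writing $z = (|z|/c)(c z/|z|) + (1 - |z|/c) \cdot 0$ and using concavity plus nonnegativity of $\phi$ gives $\phi(z) \geq (1 - |z|/c)\phi(0) \geq \phi(0)/2$. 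Consequently,
$$ Vol_n(K_z) \geq 2^{-n} Vol_n(K \cap E). $$

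It then remains to bound $Vol_n(K \cap E)$ from below. By Fubini and the maximality of $\phi$ at the origin,
$$ Vol_{2n}(K) = \int_{Proj_{E^{\perp}}(K)} Vol_n(K_z) \, dz \leq Vol_n(Proj_{E^{\perp}}(K)) \cdot Vol_n(K \cap E) \leq Vol_n(B^n) \cdot Vol_n(K \cap E), $$
using that $Proj_{E^{\perp}}(K) \subseteq B^{2n} \cap E^{\perp}$ since $K \subseteq B^{2n}$. Combining this with the hypothesis $Vol_{2n}(K) \geq e^{-10n} Vol_{2n}(B^{2n})$ together with the standard Stirling estimate
$$ \frac{Vol_{2n}(B^{2n})}{Vol_n(B^n)^2} = \frac{\Gamma(n/2+1)^2}{\pi^n/\pi^n \cdot \Gamma(n+1)} = \frac{\Gamma(n/2+1)^2}{\Gamma(n+1)} \geq \tilde c^n $$
for some absolute $\tilde c > 0$, yields $Vol_n(K \cap E) \geq (e^{-10} \tilde c)^n Vol_n(B^n)$. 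Combined with the previous display this gives $Vol_n(E \cap (x+K)) \geq c_2^n Vol_n(B^n)$, as required.

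The main conceptual point is to apply Theorem \ref{thm_fvr} so that the projection lower bound sits on $E^{\perp}$ rather than on $E$ itself: central symmetry together with Brunn's theorem then transfers this \emph{projection} bound into a lower bound on \emph{sections} of translates of $K$ parallel to $E$. After that step, the remaining work is standard Fubini and Stirling bookkeeping, and no further convex-geometric input is required.
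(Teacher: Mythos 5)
Your proposal is correct and is essentially the paper's proof: apply Theorem \ref{thm_fvr} with $E^\perp$ in the role of the projected subspace, use Fubini together with the observation that the central section $K\cap E$ has maximal volume among parallel sections, and show that sections at displacement of norm at most $c/2$ lose at most a factor $2^{-n}$. The only cosmetic difference is that you package these last two steps via Brunn's concavity of $z \mapsto Vol_n(K_z)^{1/n}$, whereas the paper derives them directly from Brunn--Minkowski, replacing your convex-combination estimate by the explicit set inclusion $(\{y\}+K\cap E)/2 \subseteq K \cap (E - x)$.
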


\begin{proof} We set $c_1 = c/2$ where $c > 0$ is the constant from
Theorem \ref{thm_fvr}. Thus there exists an $n$-dimensional
subspace $E$ with
\begin{equation}  2 c_1 B^{2n} \cap E^{\perp}
\subseteq Proj_{E^{\perp}}(K) \label{eq_1056}
\end{equation}
where $E^{\perp}$ is the orthogonal
complement to $E$ in $\RR^{2n}$. By Fubini's theorem,
\begin{equation} e^{-10 n} \cdot Vol_{2n}(B^{2n}) \leq
Vol_{2n}(K) \leq Vol_n(Proj_{E^{\perp}}(K))
\cdot \sup_{x \in E^{\perp}} Vol_n(E \cap (x + K)). \label{eq_850}
\end{equation}
The Brunn-Minkowski inequality and the
central symmetry of $K$ imply that for any $x \in E^{\perp}$,
$$ Vol_n(E \cap K)^{\frac{1}{n}} \geq  \frac{Vol_n(E
\cap (x + K))^{\frac{1}{n}} + Vol_n(E \cap (-x + K))^{\frac{1}{n}}}{2}
= Vol_n(E \cap (x + K))^{\frac{1}{n}}.
$$
Thus the supremum in (\ref{eq_850}) is attained for $x = 0$.
Since $K \subseteq B^{2n}$ we conclude from (\ref{eq_850}) that
\begin{equation}
Vol_n(K \cap E) \geq e^{-10 n} \cdot
\frac{Vol_{2n}(B^{2n})}{Vol_n(B^n)} \geq e^{-C n}
\cdot Vol_n(B^n),
\label{eq_851}
\end{equation}
for some constant $C > 0$. Let $x \in \RR^{2n}$ satisfy $|x| \leq c_1$.
Then $Proj_{E^{\perp}}(-2x) \in 2 c_1 B^{2n} \cap E^{\perp}$.
According to (\ref{eq_1056}) there exists $y \in K$ with $y + 2x \in E$.
Thus $(\{ y \} + K \cap E) / 2 \subseteq K \cap (E - x)$.
By (\ref{eq_851}) and the convexity of $K$,
$$ Vol_n( E \cap (x + K) ) = Vol_n(K \cap (E - x)) \geq Vol_n
\left( \frac{\{ y \} + K \cap E}{2} \right) \geq c_2^n
\cdot Vol_n(B^n),
$$
for some constant $c_2 > 0$, completing the proof of the lemma.
\end{proof}

As in the previous subsection
we write $\gamma_{2n}$ for the standard Gaussian probability
measure in $\RR^{2n}$.
For a subspace $E \subseteq \RR^{2n}$, write $\gamma_E$
for the standard Gaussian measure in the subspace $E$. For
$K \subset \RR^{2n}$ we denote $\gamma_E(K \cap E)$ by
$\gamma_E(K)$.

\begin{coro}
Let $K \subseteq \RR^{2n}$ be a centrally-symmetric convex
body with $\gamma_{2n}(K) \geq e^{-n}$.
Then there exists an $n$-dimensional subspace
$E \subseteq \RR^{2n}$ such that for any $v \in \RR^{2n}$,
$$ |v| \leq \sqrt{n} \qquad \Longrightarrow \qquad
\gamma_E( v + C K ) \geq c^n. $$
\label{cor_1024}
\end{coro}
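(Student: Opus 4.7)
The plan is to reduce Corollary \ref{cor_1024} to Lemma \ref{lem_1006} in four steps: (i) truncate $K$ to the Euclidean ball of radius $R \sim \sqrt{n}$, (ii) rescale to land inside $B^{2n}$, (iii) invoke Lemma \ref{lem_1006}, and (iv) convert the resulting Lebesgue volume estimate back to a Gaussian estimate on the $n$-dimensional subspace $E$.

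For step (i) I would set $R = 3\sqrt{n}$ and $K' := K \cap R B^{2n}$. Standard Gaussian concentration in $\RR^{2n}$ gives $\gamma_{2n}(R B^{2n}) \geq 1 - e^{-n}/2$, hence $\gamma_{2n}(K') \geq e^{-n}/2$. Since the density of $\gamma_{2n}$ is bounded above by $(2\pi)^{-n}$, one obtains $Vol_{2n}(K') \geq (2\pi)^n e^{-n}/2$. For step (ii), the rescaled body $\tilde{K} := K'/R$ lies in $B^{2n}$, and a short Stirling-based calculation using $Vol_{2n}(B^{2n}) = \pi^n/n!$ shows that $Vol_{2n}(\tilde K) \geq e^{-10 n} Vol_{2n}(B^{2n})$ with substantial slack. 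Step (iii) is then immediate: Lemma \ref{lem_1006} applied to $\tilde K$ yields an $n$-dimensional subspace $E \subseteq \RR^{2n}$ together with absolute constants $c_1, c_2 > 0$ such that
$$ Vol_n\bigl( E \cap (x + \tilde K) \bigr) \geq c_2^n \cdot Vol_n(B^n) \qquad \text{for every } x \in c_1 B^{2n}. $$

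Next I would fix an absolute constant $C$ large enough so that $C R c_1 \geq \sqrt{n}$; since $R = 3 \sqrt{n}$, the choice $C = 1/(3 c_1)$ suffices. Rescaling the previous inequality by the factor $CR$ and using the inclusion $K' \subseteq K$ yields, for every $v \in \RR^{2n}$ with $|v| \leq \sqrt{n}$,
$$ Vol_n\bigl( E \cap (v + CK) \bigr) \geq Vol_n\bigl( E \cap (v + CK') \bigr) \geq (C R c_2)^n \cdot Vol_n(B^n). $$
For step (iv), observe that every $u \in v + CK'$ satisfies $|u| \leq |v| + C R \leq (1 + 3C) \sqrt{n}$, so the density of $\gamma_E$ restricted to $E \cap (v + CK')$ is at least $(2\pi)^{-n/2} \exp(-(1+3C)^2 n/2)$. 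Combining this with the volume lower bound and the standard estimate $Vol_n(B^n) \geq (2 \pi e / n)^{n/2}/\sqrt{\pi n}$, all powers of $\sqrt{n}$ cancel and one obtains $\gamma_E(v + CK) \geq c^n$ for a universal $c > 0$, as required.

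No step is a genuine obstacle; the argument is essentially bookkeeping, reconciling the Gaussian concentration radius (step i), the volume-ratio threshold $e^{-10 n}$ of Lemma \ref{lem_1006} (step ii), the requirement $C R c_1 \geq \sqrt{n}$ needed to cover all admissible $v$ (step iii), and the exponential density loss (step iv). Each enjoys plenty of slack, so the universal constants $R, c_1, C$ can be chosen compatibly.
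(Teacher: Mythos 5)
Your proposal is correct and uses the same key ingredient (Lemma \ref{lem_1006}) and the same overall strategy --- convert the Gaussian lower bound on $K$ into a Lebesgue volume-ratio lower bound for a truncated and rescaled copy of $K$, apply the lemma, then rescale back and pass from Lebesgue volume on $E$ to Gaussian measure on $E$ via a pointwise density bound. The one place where you and the paper genuinely diverge is in the Gaussian-to-Lebesgue conversion step. You truncate at the \emph{large} radius $R = 3\sqrt{n}$, which lies above the typical radius $\sqrt{2n}$ of the Gaussian in $\RR^{2n}$; this retains almost all of the Gaussian mass of $K$, so $\gamma_{2n}(K') \geq e^{-n}/2$, and you then pass to Lebesgue volume via the crude density upper bound $(2\pi)^{-n}$ together with a Stirling estimate for $Vol_{2n}(B^{2n})$. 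The paper instead truncates at a \emph{small} radius $c_1 \sqrt{n}$, chosen below the typical radius precisely so that $\gamma_{2n}(c_1\sqrt{n}\,B^{2n}) \leq e^{-n}/2$, and uses the radial decomposition $\gamma_{2n}(K) \leq \gamma_{2n}(rB^{2n}) + \sigma_{2n-1}(K/r)$ to deduce $\sigma_{2n-1}\bigl(K/(c_1\sqrt{n})\bigr) \geq e^{-n}/2$; the volume ratio $Vol_{2n}(K_1)/Vol_{2n}(c_1\sqrt{n}\,B^{2n})$ is then bounded below directly by this spherical measure, with no density bounds or Stirling needed. Your route is more elementary and perhaps more transparent at the cost of a short explicit Stirling computation; the paper's route is slicker and avoids the computation by exploiting the spherical-measure identity. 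Both land comfortably inside the $e^{-10n}$ threshold of Lemma \ref{lem_1006}, and both handle the final rescaling and density step in the same way, so the constants in the conclusion come out comparable.

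Two small points worth tightening in your write-up, neither fatal: (a) the factor $1/\sqrt{\pi n}$ (and the Stirling correction $e^{1/(6n)}$) left over at the end must be absorbed into $c^n$, which is fine but deserves a sentence; and (b) your step (iii) scales the sliced volume by $(CR)^n$, not $(CR)^{2n}$ --- you do this correctly, but it is the point most prone to error since $E$ is $n$-dimensional while $K$ lives in $\RR^{2n}$, so it is worth stating explicitly that $CR\bigl(E \cap (x + \tilde K)\bigr) = E \cap (CRx + CK')$ because $E$ is a linear subspace.
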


\begin{proof}
Write $\sigma_{2n-1}$ for the uniform probability measure on the
unit sphere $S^{2n-1} = \{ x \in \RR^{2n} \, ; \, |x| = 1 \}$.
For $K \subset \RR^{2n}$ denote $\sigma_{2n-1}(K \cap S^{2n-1})$
by $\sigma_{2n-1}(K)$.
Since $K$
is a convex set containing the origin and the Gaussian measure is
rotationally-invariant, for any $r > 0$,
$$
e^{-n} \leq \gamma_{2n}(K) \leq \gamma_{2n}(r B^{2n})
+ \gamma_{2n}(K \setminus r B^{2n}) \leq \gamma_{2n}(r B^{2n})
+ \sigma_{2n-1} \left( \frac{K}{r} \right).
$$
A standard estimate shows that
$\gamma_{2n} ( c_1 \sqrt{n} B^{2n}) \leq e^{-n}/2$
for some universal constant $c_1 > 0$.
It follows that for $K_1 = K \cap c_1 \sqrt{n} B^{2n}$,
$$
\frac{Vol_{2n}(K_1)}{Vol_{2n}(c_1 \sqrt{n} B^{2n})}
\geq \sigma_{2n-1} \left( \frac{K_1}{c_1 \sqrt{n}} \right)
= \sigma_{2n-1} \left( \frac{K}{c_1 \sqrt{n}} \right)
\geq e^{-n} / 2.
$$
By Lemma \ref{lem_1006}, there exists an $n$-dimensional subspace $E \subseteq \RR^{2n}$
such that
$$
\forall x \in c_2 B^{2n}, \quad Vol_n \left( E \cap \left(x
+ \frac{K_1}{c_1 \sqrt{n}} \right) \right)
\geq c^n \cdot Vol_n(B^n) \geq \left( \frac{\tilde{c}}{\sqrt{n}} \right)^n.
$$
Now that the universal constants $c_1$ and $c_2$ are
determined, we proceed as follows: For any $v \in \RR^{2n}$
with $|v| \leq \sqrt{n}$,
\begin{align*}
\gamma_E \left( v + \frac{K}{c_1 c_2} \right) &
\geq \gamma_E \left( v + \frac{K_1}{c_1 c_2} \right)
\geq e^{-C n} Vol_n  \left( E \cap \left( v + \frac{K_1}{c_1 c_2}
\right) \right) \\
& \geq \left( \tilde{c} \sqrt{n} \right)^n Vol_n
\left( E \cap \left( \frac{c_2 v}{ \sqrt{n}}
+ \frac{K_1}{c_1 \sqrt{n}} \right) \right)
\tag*{\qedhere} \geq \bar{c}^n.
\end{align*}
\end{proof}

\bigskip
Before continuing with the proof of Theorem \ref{prop_948_}
recall that as mentioned in the previous subsection,
for any $a \in \RR^n$ and a centrally-symmetric
measurable set $T \subseteq \RR^n$,
$$ \gamma_n(T + a) \geq e^{-\|a\|^2/2} \gamma_n(T). $$

\bigskip \begin{proof}[Proof of Theorem \ref{prop_948_}]
We may assume that $n \geq 5 \cdot \eps^{-2} \log(2 + \eps^2 m)$, thus
$$
\eps \geq  2 \sqrt{\frac{\log (2 +  m/n)}{n}}.
$$
We identify $\RR^n$ with the subspace of $\RR^{2n}$ of all vectors
whose last $n$ coordinates vanish,
thus we may write $\RR^n \subseteq \RR^{2n}$.
Let $U \in O(2n)$ be an orthogonal matrix to
be determined later on. Observe that for all $i,j$,
\begin{equation} \left| \left \langle \frac{X_i}{\sqrt{n}}, \frac{Y_j}
{\sqrt{n}} \right \rangle - \left \langle a_i, b_j
\right \rangle \right | \leq
\left| \left \langle \frac{U X_i}{{\sqrt{n}}} - a_i, b_j
\right \rangle \right | + \left| \left \langle \frac{X_i}{\sqrt{n}},
\frac{Y_j}{\sqrt{n}} - U^{-1} b_j \right \rangle \right |.
\label{eq_936}
\end{equation}
We shall bound separately each of the two summands on the
right-hand side of (\ref{eq_936}). Define
$$ K = \left \{ x \in \RR^{2n} \, ; \, \left| \left \langle
\frac{x}{\sqrt{n}}, b_j \right \rangle \right|
\leq \eps \, \ \text{for} \ j=1,\ldots,m \right \}. $$
Recall that $\Phi(t) = (2 \pi)^{-1/2} \int_t^{\infty}
\exp(-s^2/2) ds$
and $\Phi(t) \leq \exp(-t^2/2)$ for $t \geq 1$.
By the Khatri-Sidak lemma
\begin{align*} \gamma_{2n}(K) & \geq \prod_{j=1}^m
\gamma_{2n} \left( \left \{ x \in \RR^{2n} \, ; \, \left|
\left \langle \frac{x}{\sqrt{n}}, b_j \right \rangle \right|
\leq \eps \right \} \right)
= \prod_{j=1}^{m} \left( 1 - 2 \Phi(\sqrt{n} \eps / |b_j|) \right)
\\ &
 \geq \left(1 - 2 \Phi \left( 2 \sqrt{\log (2+m/n)} \right) \right)^{m}
\geq \left(1 - \frac{n}{m+n} \right)^m \geq e^{-n}. \nonumber
\end{align*}
From Corollary \ref{cor_1024}, there exists an $n$-dimensional
subspace $E \subseteq \RR^{2n}$ such that for any $v \in \RR^{2n}$
\begin{equation}  |v| \leq \sqrt{n} \qquad \Longrightarrow \qquad
\gamma_E( v + C K ) \geq c^n. \label{eq_1116} \end{equation}
Let us now set $U \in O(2n)$ to be any orthogonal
transformation with $U(\RR^n) = E$.
We  also set $C_3$ to be a sufficiently large universal
constant such that $\PP(|X_i| \leq C_3 \sqrt{n} ) \geq 1 - c^n/2$,
where $c > 0$ is the constant from (\ref{eq_1116}).
Then
\begin{align} \label{eq_1107} \PP & \left(\forall i, \ U X_i -
\sqrt{n} a_i \in C K \quad \text{and}
\quad |X_i| \leq C_3 \sqrt{n} \right)  \\ &
= \prod_{i=1}^m \gamma_E \left(
\left( \sqrt{n} a_i + C K  \right) \cap C_3 \sqrt{n} B^{2n} \right)
 \geq \exp(-\hat{C} n m). \nonumber \end{align}
 We move on to bounding the second summand on the
right-hand side of (\ref{eq_936}). We condition
on the $X_i$'s satisfying the event described in (\ref{eq_1107}).
In particular, $|X_i| \leq C_3 \sqrt{n}$ for all $i$.
We now define
$$
T = \left \{ y \in \RR^{n} \, ; \, \left| \left \langle
\frac{y}{\sqrt{n}}, \frac{X_i}{\sqrt{n}} \right \rangle \right|
\leq \eps \, \ \text{for} \ i=1,\ldots,m \right \}.
$$
Arguing as before, we deduce from the Khatri-Sidak
lemma that $\gamma_{n}(T) \geq e^{- C n}$.
Write $P(x_1,\ldots,x_{2n}) = (x_1,\ldots,x_n)$. Then for any $j$,
$$
\PP \left( \forall i, \left| \left \langle
\frac{X_i}{\sqrt{n}}, \frac{Y_j}{\sqrt{n}} - U^{-1} b_j
\right \rangle \right| \leq \eps \right) =
\gamma_n \left( T + \sqrt{n} P(U^{-1} b_j) \right)
\geq e^{-n \|b_j\|^2/2} \gamma_n(T) \geq e^{-\tilde{C} n}.
$$
Next we set $\tilde{C}_3$ to be a sufficiently large universal
constant such that $\PP(|Y_i| \leq \tilde{C}_3 \sqrt{n} )
\geq 1-\exp(-\tilde{C} n)/2$.

\medskip To summarize,  with probability at
least $\exp(- \hat{C} n m)$, for all $i,j$,
$$ U X_i - \sqrt{n} a_i \in C K, \quad \left| \left \langle
\frac{X_i}{\sqrt{n}}, \frac{Y_j}{\sqrt{n}} - U^{-1} b_j
\right \rangle \right| \leq \eps  \quad \text{and} \quad |X_i| + |Y_i| \leq \tilde{C} \sqrt{n}. $$
We thus have an upper bound of $\bar{C} \eps$
for the right-hand side of (\ref{eq_936}) for all $i,j$,
and moreover, $|X_i| + |Y_i| \leq \tilde{C} \sqrt{n}$ for all $i$.
This implies a variant of Theorem \ref{prop_948_},
in which the $\eps$ in (\ref{eq_1126}) is replaced by $\bar{C} \eps$.
However, by adjusting the constants, this variant is
clearly  seen to be
equivalent to the original formulation, and the proof is complete.
\end{proof}

\subsection{Keeping the inner products with small distortion}

In this subsection we prove Theorem \ref{t13}.
The
main result we use is the well-known low $M^*$-estimate
due to Pajor and Tomczack-Jaegermann, which builded upon earlier
contributions by Milman and by Gluskin, see e.g.,
\cite[Chapter 7]{AGM}:

\begin{theo}
Let $1 \leq t \leq n$ and let $K \subseteq \RR^n$ be a
centrally-symmetric convex body with $\gamma_n(K) \geq 1/2$.
Let $E \subseteq \RR^n$ be a random subspace of dimension $n - t$.
Then with probability at least $1 - C \exp(-c t)$ of selecting $E$,
$$ \tilde{c} \sqrt{t} B_E \subseteq Proj_E( K ). $$
Here, $c, \tilde{c}, C  > 0$ are universal constants
and $B_E = B^n \cap E$.
\label{thm_1405}
\end{theo}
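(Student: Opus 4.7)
My plan is to reduce the containment $\tilde{c} \sqrt{t}\, B_E \subseteq Proj_E(K)$ to a radius bound for a random section of the polar body. Polarity inside $E$ satisfies $(Proj_E K)^{\circ} = K^{\circ} \cap E$, so the desired inclusion is equivalent to
\[
\sup_{y \in K^{\circ} \cap E} |y| \;\le\; \frac{1}{\tilde{c} \sqrt{t}}.
\]
Thus the goal becomes a high-probability upper bound on the Euclidean radius of the random section $K^{\circ} \cap E$.

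The first step is to extract from the hypothesis $\gamma_n(K) \ge 1/2$ a universal bound on the Gaussian width of $K^{\circ}$. Writing $\|\cdot\|_K$ for the Minkowski gauge of $K$, we have $w(K^{\circ}) := \EE\, h_{K^{\circ}}(G) = \EE\,\|G\|_K$ for a standard Gaussian $G$ in $\RR^n$. The hypothesis says that the median of $\|G\|_K$ is at most $1$, and Borell's inequality for centrally symmetric convex bodies (or, equivalently, Gaussian concentration exploiting the central symmetry of $K$) yields $\PP(\|G\|_K > s) \le 2 \exp(-c s^2)$ for all $s \ge 1$. Integrating the tail gives $w(K^{\circ}) \le C$.

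The second step is the classical Gordon-style low-$M^{*}$-estimate: for a random $(n-t)$-dimensional subspace $E$ (say the kernel of a standard $t \times n$ Gaussian matrix),
\[
\sup_{y \in K^{\circ} \cap E} |y| \;\le\; \frac{C\, w(K^{\circ})}{\sqrt{t}}
\]
with probability at least $1 - C \exp(-c t)$. This is proved via Gordon's Gaussian min--max comparison applied to the process $y \mapsto \langle G, y \rangle$ on $K^{\circ} \cap S^{n-1}$, combined with Gaussian concentration for a $1$-Lipschitz functional of the matrix generating $E$ (see \cite{AGM}, Chapter 7). Combining with the bound $w(K^{\circ}) \le C$ yields a section radius of order $C/\sqrt{t}$, and dualising back gives $\tilde{c} \sqrt{t}\, B_E \subseteq Proj_E(K)$ with the required probability.

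The main obstacle lies in the second step: obtaining simultaneously the sharp $\sqrt{t}$ factor and the $\exp(-c t)$ failure probability forces the use of Gordon's inequality rather than Slepian's, together with careful Gaussian concentration of a $1$-Lipschitz function of the $t \times n$ Gaussian matrix that defines $E$. Once this core section estimate is granted, the polarity reduction of step one and the Gaussian-width bound via Borell are essentially routine.
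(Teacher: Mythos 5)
Your proposal is correct and follows essentially the same route as the paper: both reduce the theorem to the low-$M^*$ estimate from \cite[Chapter 7]{AGM} (which delivers the $\sqrt{t}$ radius and the $e^{-ct}$ failure probability via Gordon's comparison), and both supply the missing ingredient by converting the hypothesis $\gamma_n(K) \ge 1/2$ into a mean-width bound $w(K^\circ) \le C$, equivalently $M(K) \le C/\sqrt{n}$. The only cosmetic difference is that you extract this bound from Gaussian concentration of $\|G\|_K$ (implicitly using the inradius bound $r(K) \ge \Phi^{-1}(3/4)$), whereas the paper works with spherical concentration of $\|\cdot\|_K$ on $S^{n-1}$ and cites \cite[Lemma 5.2.3]{AGM}.
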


\begin{proof} Our formulation is very close to (7.1.1)
and Theorem 7.3.1 in \cite{AGM}. We only need to explain a standard fact,
why  $\gamma_n(K) \geq 1/2$
implies the bound $M(K) \leq C / \sqrt{n}$ where
$$ M(K) := \int_{S^{n-1}} \| x \|_K d \sigma_{n-1}(x) $$
and $\| x \|_K = \inf \{ \lambda > 0 \, ; \, x \in \lambda K \}$.
However, as in the proof of Corollary \ref{cor_1024},
we see that
$$ \frac{1}{2} \leq \gamma_n(K) \leq \gamma_n \left( \frac{\sqrt{n}}{2}
B^n \right) + \gamma_n \left( K \setminus \frac{\sqrt{n}}{2} B^n \right)
\leq e^{-cn} + \sigma_{n-1} \left( \frac{2}{\sqrt{n}} K  \right). $$
Hence $\sigma_{n-1} \left( \frac{2}{\sqrt{n}} K  \right) \geq 1/2
- \exp(-cn)$. In other words, in a large subset of $S^{n-1}$,
the norm $\| x \|_K$ is at most $2 / \sqrt{n}$.
In \cite[Lemma 5.2.3]{AGM} it is explained how
concentration inequalities upgrade this
fact to the desired bound $M(K) \leq C / \sqrt{n}$.
\end{proof}

Our next observation is that the
assumption  $\gamma_n(K) \geq 1/2$ in Theorem \ref{thm_1405}
is too strong, and may be weakened to
the requirement that $\gamma_n(K) \geq \exp(-c t)$.

\begin{theo}
Let $1 \leq t \leq n$ and let $K \subseteq \RR^n$ be a
centrally-symmetric convex body with $\gamma_n(K) \geq \exp(-c_0 t)$.
Let $E \subseteq \RR^n$ be a random subspace of dimension
$n - t$. Then with probability of at least $1 - C \exp(-c t)$,
$$ c_1 \sqrt{t} B_E \subseteq Proj_E( K ). $$
\label{thm_1059}
\end{theo}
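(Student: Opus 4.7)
I plan to deduce Theorem \ref{thm_1059} directly from Theorem \ref{thm_1405} by an enlarge-and-peel argument. The key observation is that, by the Gaussian isoperimetric inequality, compensating a Gaussian measure of $e^{-c_0 t}$ only costs an enlargement of radius $\sqrt{2 c_0 t}$, which is itself on the $\sqrt{t}$ scale. Provided $c_0$ is small compared to the universal constant $\tilde c$ from Theorem \ref{thm_1405}, this enlargement is dominated by the conclusion and can be stripped off cleanly using the central symmetry and convexity of $K$.

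In detail, let $\tilde c>0$ denote the constant from Theorem \ref{thm_1405}, fix $\alpha:=\sqrt{2 c_0}$ with $c_0$ to be chosen small, and set
$$ K' \;:=\; K + \alpha\sqrt{t}\,B^n, $$
which is again a centrally symmetric convex body. The Borell--Sudakov--Tsirelson Gaussian isoperimetric inequality, together with the one-dimensional tail bound $\Phi(x)\le (1/2)e^{-x^2/2}$ for $x\ge 0$ (in the notation of Section 6.3), gives $\gamma_n(K')\ge 1/2$. Indeed, the tail bound implies $\Psi^{-1}(e^{-c_0 t})\ge -\sqrt{2c_0 t}$ (where $\Psi=1-\Phi$ is the Gaussian CDF), so isoperimetry yields $\gamma_n(K')\ge \Psi(0)=1/2$.

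Now apply Theorem \ref{thm_1405} to the body $K'$: for a random subspace $E\subseteq\mathbb{R}^n$ of dimension $n-t$, with probability at least $1-C\exp(-ct)$,
$$ \tilde c\sqrt{t}\,B_E \;\subseteq\; \mathrm{Proj}_E(K') \;=\; \mathrm{Proj}_E(K) + \alpha\sqrt{t}\,B_E, $$
where the equality is linearity of Minkowski addition under orthogonal projection. Since $C:=\mathrm{Proj}_E(K)$ is a centrally symmetric convex body in $E$, its support function satisfies $h_{C+sB_E}(\theta)=h_C(\theta)+s$ for every $\theta\in E\cap S^{n-1}$. The containment above is therefore equivalent to $h_C(\theta)\ge (\tilde c-\alpha)\sqrt{t}$ for every such $\theta$, i.e., $C\supseteq (\tilde c-\alpha)\sqrt{t}\,B_E$. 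Choosing $c_0$ small enough that $\alpha=\sqrt{2c_0}<\tilde c$ (for instance $c_0=\tilde c^2/8$) yields $c_1:=\tilde c-\alpha>0$ and gives the conclusion.

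The only delicate point is the quantitative balance between $c_0$ and $\tilde c$; the argument above produces some universal $c_0>0$, which is all the theorem requires. The critical peeling step relies on both the central symmetry and the convexity of $\mathrm{Proj}_E(K)$, and would be the main obstacle in any attempt to weaken these hypotheses or to push $c_0$ past the $\tilde c^2/2$ threshold where the enlargement ceases to be absorbed by the conclusion.
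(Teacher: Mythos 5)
Your proof is correct and follows essentially the same strategy as the paper's: enlarge $K$ by a ball of radius on the $\sqrt{t}$ scale, invoke the Gaussian isoperimetric inequality to boost the Gaussian measure past $1/2$, apply Theorem \ref{thm_1405} to the enlarged body, and peel off the added ball using convexity (the paper phrases the peeling as Minkowski cancellation, you phrase it via support functions, but these are the same fact). The only cosmetic difference is that the paper fixes the enlargement radius at $(\tilde{c}/2)\sqrt{t}$ and then chooses $c_0$ to match, whereas you fix $\alpha=\sqrt{2c_0}$ and tune $c_0$ at the end; also, the peeling step needs only closedness and convexity of $\mathrm{Proj}_E(K)$, not central symmetry.
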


\begin{proof}
We may select the universal constant $c_0 > 0$ so that
the probability that a standard normal random variable
exceeds $\tilde{c} \sqrt{t}/2$, where $\tilde{c}$ is the constant
in the conclusion of Theorem \ref{thm_1405}, is at most $e^{-c_0t}$.

According to the Gaussian isoperimetric inequality,
for a half-space $H \subseteq \RR^n$,
$$ \gamma_n(K) = \gamma_n(H) \qquad \Longrightarrow
\qquad \gamma_n(K + (\tilde{c} \sqrt{t}/2) B^n)
\geq \gamma_n(H + (\tilde{c} \sqrt{t}/2) B^n). $$
Since $\gamma_n(H) = \gamma_n(K) \geq \exp(-c_0 t)$,
the choice of $c_0$ implies that the distance between the half-space $H$
and the origin is at most $\tilde{c} \sqrt{t} / 2$.
Consequently, $H + (\tilde{c} \sqrt{t}/2) B^n$
is a half-space containing the origin,
thus its Gaussian measure is at least $1/2$. Hence
$$ T := K + \frac{\tilde{c}}{2} \sqrt{t} B^n $$
is a centrally-symmetric convex body with $\gamma_n(T) \geq 1/2$.
By Theorem \ref{thm_1405}, with probability at least $1 - C \exp(-c
t)$
of selecting $E$,
\begin{equation}  \tilde{c} \sqrt{t} B_E \subseteq Proj_E(T)
= Proj_E(K) + Proj_E \left(\frac{\tilde{c} \sqrt{t} }{2}
B^n \right) = Proj_E(K) + \frac{\tilde{c} \sqrt{t} }{2} B_E.
\label{eq_1054}
\end{equation}
Since $B_E$ and $Proj_E(K)$ are convex, we deduce from
(\ref{eq_1054}) that $(\tilde{c} \sqrt{t} / 2) B_E
\subseteq Proj_E(K)$, completing the proof.
\end{proof}

\noindent
{\bf Remark.}
Consider the case where $$ K = [-r,r]^n $$ is an
$n$-dimensional cube, for $r = c \sqrt{\log(n/\ell)}$.
In this case one may easily verify that
$\gamma_n(K) \geq \exp(-c_0 \ell)$. Thus, according to
the last Theorem, with high probability
a random $(n-\ell)$-dimensional projection of $K$
contains a Euclidean ball of radius $\tilde{c} \sqrt{\ell}$.
This recovers an inequality by
Garnaev and Gluskin \cite{GG}. Moreover, the tightness
of the Garnaev-Gluskin result shows that the requirement that
$\gamma_n(K) \geq \exp(-c_0 \ell)$ in the Theorem is optimal.

\begin{coro} Let $K \subseteq \RR^n$ be a centrally-symmetric
convex body with $\gamma_n(K) \geq \exp(-c_0 t)$ with $1 \leq t \leq n$.
Then there exists a $t$-dimensional subspace
$E \subseteq \RR^n$ such that for any $v \in \RR^n$,
\begin{equation}  |v| \leq \sqrt{t} \qquad \Longrightarrow
\qquad E \cap (v + C K) \neq \emptyset. \label{eq_1100}
\end{equation} \label{cor_1104}
\end{coro}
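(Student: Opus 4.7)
The plan is to derive the corollary directly from Theorem \ref{thm_1059} by passing to the orthogonal complement. Using the hypothesis $\gamma_n(K) \geq \exp(-c_0 t)$, Theorem \ref{thm_1059} produces (with positive probability, hence existentially) an $(n-t)$-dimensional subspace $F \subseteq \RR^n$ with
$$ c_1 \sqrt{t}\, B_F \subseteq \text{Proj}_F(K). $$
I would then take $E := F^\perp$, a $t$-dimensional subspace, as the subspace asserted by the corollary.

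To verify (\ref{eq_1100}), fix $v \in \RR^n$ with $|v| \leq \sqrt{t}$. The condition $E \cap (v + CK) \neq \emptyset$ asks for $e \in E$ and $y \in CK$ with $e = v + y$. Applying $\text{Proj}_F$ to both sides and using $\text{Proj}_F(e) = 0$ (since $e \in F^\perp$), this reduces to finding $y \in CK$ whose $F$-projection equals $-\text{Proj}_F(v)$.

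Since $|\text{Proj}_F(v)| \leq |v| \leq \sqrt{t}$, choosing $C := 1/c_1$ places the vector $-\text{Proj}_F(v)/C$ inside $c_1 \sqrt{t}\, B_F$, which by the inclusion above lies in $\text{Proj}_F(K)$. Hence there is $y_0 \in K$ with $\text{Proj}_F(y_0) = -\text{Proj}_F(v)/C$, and $y := C y_0 \in CK$ satisfies $\text{Proj}_F(y) = -\text{Proj}_F(v)$. Setting $e := v + y$ yields $\text{Proj}_F(e) = 0$, so $e \in E$, while $e - v = y \in CK$, completing the verification.

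The only real step is the geometric translation itself: recognizing that the desired $t$-dimensional section statement is dual, via $E = F^\perp$ and the decomposition $v = \text{Proj}_F(v) + \text{Proj}_E(v)$, to the projection inclusion already supplied by Theorem \ref{thm_1059}. No additional quantitative work is needed beyond the constant $C = 1/c_1$.
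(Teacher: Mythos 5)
Your proof is correct and follows the same route as the paper: the paper's proof simply states that condition (\ref{eq_1100}) is equivalent to $\sqrt{t}\, B_F \subseteq \mathrm{Proj}_F(CK)$ for $F = E^\perp$ and invokes Theorem \ref{thm_1059} with $C = 1/c_1$, while you have spelled out the duality between the section statement and the projection inclusion in full detail.
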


\begin{proof} Write $F = E^{\perp}$. Condition (\ref{eq_1100})
is equivalent to $\sqrt{t} B_F \subseteq Proj_F(C K)$.
The corollary thus follows from Theorem \ref{thm_1059}
with $C=1/c_1$.
\end{proof}

\bigskip \begin{proof}[Proof of Theorem \ref{t13}]
We may assume that $t \leq n$ as otherwise the conclusion of the theorem is trivial.
We may also assume that $C>5/c_0$ where $c_0 > 0$ is  the universal constant from Corollary \ref{cor_1104}. That is,
$c_0 t \geq 5 \cdot \eps^{-2} \log(2 + \eps^2 n)$, thus
$$
\eps \geq  2 \sqrt{\frac{\log (2 +  n/(c_0 t))}{c_0 t}}.
$$
As in the proof of Theorem \ref{prop_948_}
identify $\RR^t$ with the subspace of $\RR^{n}$ of all vectors
whose last $n-t$ coordinates vanish,
thus we may write $\RR^t \subseteq \RR^{n}$.
Let $U \in O(n)$ be an orthogonal matrix to
be determined later on. For all $i,j$,
and for every vectors $X_i,Y_j$ in $\RR^n$
\begin{equation}
\left| \left \langle \frac{X_i}{\sqrt{t}}, \frac{Y_j}
{\sqrt{t}} \right \rangle - \left \langle a_i, b_j
\right \rangle \right | \leq
\left| \left \langle \frac{U X_i}{{\sqrt{t}}} - a_i, b_j
\right \rangle \right | + \left| \left \langle \frac{X_i}{\sqrt{t}},
\frac{Y_j}{\sqrt{t}} - U^{-1} b_j \right \rangle \right |.
\label{eq_9361}
\end{equation}
We next bound  the first summand on the
right-hand side of (\ref{eq_9361}). (We will later observe that we
can ensure that the second summand vanishes).
Define
$$ K = \left \{ x \in \RR^{n} \, ; \, \left| \left \langle
\frac{x}{\sqrt{t}}, b_j \right \rangle \right|
\leq \sqrt{c_0} \eps \, \ \text{for} \ j=1,\ldots,n \right \},  $$
where $c_0 > 0$ is still the constant from Corollary \ref{cor_1104}.
By the Khatri-Sidak lemma
\begin{align*} \gamma_{n}(K) & \geq \prod_{j=1}^n
\gamma_{n} \left( \left \{ x \in \RR^{n} \, ; \, \left|
\left \langle \frac{x}{\sqrt{t}}, b_j \right \rangle \right|
\leq \sqrt{c_0} \eps \right \} \right)
= \prod_{j=1}^{n} \left( 1 - 2 \Phi(\sqrt{c_0 t} \eps / |b_j|) \right)
\\ &
 \geq \left(1 - 2 \Phi \left( 2 \sqrt{\log (2+n/ (c_0t))} \right) \right)^{n}
\geq \left(1 - \frac{c_0 t}{n+ c_0 t} \right)^n \geq e^{-c_0 t}. \nonumber
\end{align*}
By Corollary \ref{cor_1104}
there exists a $t$-dimensional
subspace $E \subseteq \RR^{n}$ such that for any $v \in \RR^{n}$
$$ |v| \leq \sqrt{t} \qquad \Longrightarrow \qquad
E \cap (v +CK) \neq \emptyset. $$
Let us now set $U \in O(n)$ to be any orthogonal
transformation with $U(\RR^t) = E$, and choose $Ux_i \in E$ so that
$Ux_i -\sqrt t a_i \in CK$. Finally define
$y_j = \sqrt{t} P(U^{-1} b_j)$, where $P( z_1, z_2, \ldots ,z_n)=
(z_1,z_2, \ldots ,z_t)$.

This gives an upper bound of $C \sqrt{c_0}\eps$
for the right-hand side of (\ref{eq_9361}) for all $i,j$,
implying a variant of Theorem \ref{t13}
in which $\eps$ is replaced by $C \sqrt{c_0}\eps$.
By adjusting the constants, this variant is
equivalent to the original formulation, completing the proof.
\end{proof}

Note that the proof of Theorem \ref{t13} leads to a randomized, polynomial-time algorithm
for the computation of the $x_i, y_j$. Indeed, the orthogonal matrix $U \in O(n)$ can be chosen
randomly, and according to Theorem \ref{thm_1059} and Corollary \ref{cor_1104} such a random  matrix
works with probability of at least $1 - C \exp(-c t)$. Once the matrix $U$ is known,
the computation of $x_i$ such that $U x_i \in E$ and $U x_i \in \sqrt{t} a_i + C K$
may be done by linear programming. The computation of the $y_j$ is even quicker,
since we set $y_j = \sqrt{t} P(U^{-1} b_j)$. The total running time of the algorithm is clearly
polynomial in the input size.

\section{Concluding remarks}
\begin{itemize}
\item
By the first two parts  of Theorem \ref{t11},
$f(n,n,2 \eps)$ is much bigger than $f(n,k,\eps)$ for any
$k < c \frac{\log n}{\eps^2}$ for some absolute constant $c>0$,
implying that, as proved
recently by Larsen and Nelson \cite{LN}, the
$\frac{\log n}{\eps^2}$ bound in the Johnson-Lindenstrauss Lemma
\cite{JL} is tight.  The first part of Corollary \ref{c12}
follows by a similar reasoning. It can also be derived directly
from the result for $k=\log n/\eps^2$.
As for the ``Moreover'' part, it follows
by combining the Johnson-Lindenstrauss Lemma
with the lower bound of Theorem \ref{t11}. Corollary \ref{c24}
follows from Theorem \ref{t23} using essentially the same argument.
\item
It is worth noting that in
the proof of Theorem \ref{t31} the inner product of each rounded
vector with itself is typically not close to the square of its
original norm and hence it is crucial to keep the approximate norms
separately. An alternative, less natural possibility is to
store two independent rounded copies of each vector and use their
inner product as an approximation for its norm. This, of course,
doubles the  length of the sketch and there is no reason to do it.
For the same reason in the proof of Theorem \ref{t11} in Section 2
we had to handle norms separately and consider only inner products
between distinct vectors. Indeed, in this proof after the
conditioning $V_i$ is likely to
have much bigger norm than $w_i$, and yet the inner products of
distinct $V_i,V_j$ are typically very close to those
of the corresponding distinct $w_i,w_j$.
\item
The problem of maintaining all square distances between the points up to a
relative  error of $\eps$ is more difficult than the one
considered here. Our lower bounds, of course, hold, see \cite{IW}
for the best known upper bounds. For this problem there is still
a logarithmic
gap between the upper and lower bounds.
\item
The assertion  of Theorem \ref{thm_944}  for $m=2n$ and
$\eps=\frac{C}{\sqrt {n}}$  is tight up to a constant factor
even for the case that $a_i=b_i$ for all $i$  and the vectors
$a_i$ form an orthonormal basis of $\RR^{2n}$. Indeed, it is well
known (see, e.g., \cite{Al}) that any $2n$ by $2n$ matrix in which
every entry differs from the corresponding entry of the identity
matrix of dimension $2n$ by less than, say, $\frac{1}{2 \sqrt n}$
has rank exceeding $n$.
\item
For a matrix $A$, the $\gamma_2$-norm of $A$ denoted by
$\gamma_2(A)$ is the minimum possible value, over all
factorizations $A=XY$, of the product of the
maximum $\ell_2$-norm of a row of $X$
and the maximum $\ell_2$-norm of a column of $Y$. Therefore,
an equivalent formulation of the statement of Theorem
\ref{t13} for $\eps=O(1/\sqrt n)$ is that for any
$n$ by $n$ matrix $A$ satisfying $\gamma_2(A) \leq 1$ there
is an $n$ by $n$ matrix $B$ of rank at most, say, $n/10$ so that
$|A_{ij}-B_{ij}| \leq O(1/\sqrt n)$ for all $i,j$. It is worth
noting that the assumption that $\gamma_2(A) \leq 1$ here is
essential and cannot be replaced by a similar bound on
$\max |A_{ij}|$. Indeed, it is known (see \cite{ALSV}, Theorem 1.2)
that if $A$ is an $n$ by $n$ Hadamard matrix then any $B$ as
above has rank at least $n-O(1)$.
\item
Conjecture \ref{c14} remains open, it seems tempting to try to
iterate the assertion of Theorem \ref{thm_944} in order to prove
it. This does not work as the norms of the vectors $x_i$
and $y_i$ obtained in the proof may be much larger than $1$ (while
bounded), causing the errors in the iteration process to grow too
much. An equivalent formulation of this fact is that the
$\gamma_2$-norm
of the matrix $\langle a_i, b_j
\rangle$ is $1$ whereas that of its approximating lower rank matrix
is a larger constant.
\end{itemize}
\vspace{0.2cm}

\noindent
{\bf Acknowledgment}\, We thank Jaroslaw Blasiok,
Kasper Green Larsen and especially Jelani
Nelson for helpful comments, and for noting the
relation to the paper \cite{KOR}.

\end{document}